\documentclass[12pt]{article}
\usepackage{amsmath, amsfonts, amsthm, amssymb, color, hyperref, extarrows, enumitem, nicefrac,blindtext, mathtools, cite, anyfontsize, comment}

\newtheorem{theorem}{Theorem}[section]

\newtheorem{cor}[theorem]{Corollary}
\newtheorem{lem}[theorem]{Lemma}

\numberwithin{equation}{section}

\newtheorem{remark}[theorem]{Remark}

\newtheorem{example}{Example}

\usepackage[hyperpageref]{backref}

\usepackage{cite}

\hypersetup{
	colorlinks   = true,
	citecolor    = magenta}
	
\begin{document}
\title{\vspace{-1cm} \bf Unique continuation   for $\bar\partial u = Vu$ at infinity  }
\author{Yifei Pan \ \ and \ \  Yuan Zhang}
\date{}

\maketitle

\begin{abstract}
Motivated by  Landis's conjecture on unique continuation at infinity for the Laplacian, we study the corresponding  property for the Cauchy-Riemann operator. We prove that every weak solution of $\bar\partial u=Vu$ on a neighborhood of infinity, with $V\in L^\infty$, vanishes identically if it decays exponentially at a rate greater than $ 2\|V\|_{L^\infty}$. This conclusion is sharp both in the constant \(2\|V\|_{L^\infty}\) and in the order of exponential decay required. More generally, we establish unique continuation at infinity for a broad class of radially decaying bounded potentials, with optimal decay rates determined by the decay of the potential.   We also obtain related unique continuation results  for $L^2$ potentials and for compactly supported potentials under weaker   assumptions at infinity.

\end{abstract}

\renewcommand{\thefootnote}{\fnsymbol{footnote}}
\footnotetext{\hspace*{-7mm}
\begin{tabular}{@{}r@{}p{16.5cm}@{}}
& 2020 Mathematics Subject Classification. Primary 32W05; Secondary 35B60. \\
& Key words and phrases.
 Landis conjecture, Cauchy-Riemann operator, unique continuation, infinity. 
\end{tabular}}

\section{Introduction}

Let $u$ solve 
\begin{equation}\label{la}
     -\Delta u = V u
\end{equation} on $\mathbb R^d, d\ge 2$, where    $V\in L^\infty(\mathbb R^d)$. In 1988, Landis \cite{La} conjectured that if   $|u(x)|\le   e^{-|x|^{1+\epsilon}}$   for some $\epsilon>0$  whenever $|x|\gg1$, then $u\equiv 0$. This may be viewed as a unique continuation property at infinity  for solutions of \eqref{la} satisfying sufficiently rapid decay. The conjecture fails in general for  complex-valued potentials. Indeed,      Meshkov \cite{Me} constructed a nontrivial solution $u_0$ of \eqref{la} in $\mathbb R^2$ for some complex-valued potential $V\in L^\infty(\mathbb R^2)$ such that   $ |u_0(x)|\le  e^{-k|x|^{\frac{4}{3}}}$ for some $k>0$    whenever $|x|\gg1$. By contrast, the  case of real-valued potentials has attracted  considerable attention, particularly following the fundamental works of Bourgain--Kenig \cite{BK} and Kenig  \cite{Ke}.  More recently, Logunov, Malinnikova, Nadirashvili and  Nazarov \cite{LMNN} established the   Landis conjecture for real-valued potentials  in dimension two. In a recent preprint,  Frank and Ivanisvili \cite{FI} constructed    a nontrivial   solution of \eqref{la} with a bounded real-valued potential that decays like $e^{-k|x|^\frac{4}{3}}$ in dimension three and higher, thereby providing a counterexample to the Landis conjecture in these dimensions.

The purpose of this  paper is to investigate analogous unique continuation properties at infinity for the Cauchy-Riemann operator  $\bar\partial$. More precisely,   we consider complex-valued  functions  on a domain $\Omega\subset \mathbb C^n$ containing a neighborhood of infinity and satisfying
  \begin{equation}\label{eqn}
        \bar\partial u = Vu 
    \end{equation}
 on $\Omega$ in the sense of distributions,  where $V$ is a measurable $(0,1)$-form on $\Omega$.   The main objective is to determine whether sufficiently rapid decay of a solution at infinity forces it to vanish identically, thereby establishing a unique continuation principle at infinity for first-order equations of Cauchy--Riemann type.

In the case when   the potential $V$ is globally bounded, we first establish a quantitative lower bound for (normalized) solutions of \eqref{eqn} near  infinity as follows.

\begin{theorem}\label{hh}
 Suppose $u\in C(\mathbb C^n)$ satisfies  $u(0)=1$  and $$|u(z)|\le e^{C_0|z|}, \quad \ \  |z|\gg1 $$ for some $C_0>0$, and 
  $$\bar\partial u = Vu\ \  \text{on} \ \ \mathbb C^n$$ in the sense of distributions for  some   $(0,1)$-form $V\in L^\infty(\mathbb C^n)$.  
    Then there exists a positive constant $C$ depending only on $n$,    $C_0$  and  $\|V\|_{L^\infty(\mathbb C^n)} $  such that 
    $$  \inf_{|z_0|=R}\sup_{|z-z_0|<1} |u(z)|\ge e^{-CR\ln R},\ \ R\gg1.$$
\end{theorem}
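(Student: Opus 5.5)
Our approach is to reduce to holomorphic functions by the standard similarity principle, and then apply a Hadamard three-circle (Jensen-type) estimate on complex lines through the origin and $z_0$.

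\emph{Step 1: restriction to a complex line.} Fix $z_0$ with $|z_0|=R$ and let $L=\{\zeta z_0/R:\zeta\in\mathbb C\}$. Set $v(\zeta)=u(\zeta z_0/R)$. Since $u$ is continuous and $\bar\partial u=Vu$ in the distribution sense with $V$ bounded, we want $\partial_{\bar\zeta}v=W v$ on $\mathbb C$ with $|W|\le \|V\|_{L^\infty}$ a.e. Restricting a distributional identity with $L^\infty$ coefficient to almost every line is delicate. We will handle it by mollification together with Fubini, applying the argument to lines $L$ through points $z$ near $z_0$ rather than exactly through $z_0$. This is harmless, because the conclusion only involves $\sup_{|z-z_0|<1}$.

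\emph{Step 2: similarity principle.} On the disc $D_{2R}=\{|\zeta|<2R\}$, put $w=\chi_{D_{2R}}W$ and $g=T w=-\frac1\pi\int \frac{w(\xi)}{\xi-\zeta}\,dA(\xi)$. Then $\partial_{\bar\zeta}g=w$, $g$ is continuous, and $|g|\le C\|V\|_\infty R$ on $D_{2R}$, since $\int_{D_{2R}}|\xi-\zeta|^{-1}dA\le CR$. Consequently $h=e^{-g}v$ is holomorphic on $D_{2R}$, and it satisfies $|h(0)|\ge e^{-CR}$, $|h|\le e^{C_0 2R+CR}$ on $D_{2R}$, and $|h|\le e^{CR}|v|$ everywhere.

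\emph{Step 3: lower bound for holomorphic functions.} This is the main step. Given a holomorphic $h$ on $D_{2R}$ with $|h(0)|\ge e^{-CR}$ and $\sup|h|\le e^{CR}$, we need $\sup_{|\zeta-R|<1}|h|\ge e^{-CR\ln R}$. We will use a chain of discs of radius $\asymp 1$ joining $0$ to $R$, together with the three-circle (Hadamard) inequality. However, a chain of about $R$ steps, each losing a fixed power, gives only $e^{-e^{cR}}$, which is far too weak.

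The better route is a single conformal estimate. Compose with the Möbius map $\phi$ of $D_{2R}$ onto the unit disc sending $0\mapsto0$. Under $\phi$ the disc $\{|\zeta-R|<1\}$ becomes a disc of hyperbolic size comparable to $1/R$, located at a fixed Euclidean distance from $0$. We then apply the two-constants theorem for $\log|h|$: $\log|h|$ is subharmonic, so on the disc $|\zeta|<3R/2$ we have
\[
\log|h(0)|\le \omega\,\sup_{E}\log|h|+(1-\omega)\sup_{D_{2R}}\log|h|,
\]
where $E=\{|\zeta-R|<1\}$ and $\omega$ is the harmonic measure of $E$ at $0$ relative to $D_{3R/2}\setminus E$. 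For a disc of radius $1$ at distance $R$, inside a disc of radius comparable to $R$, the standard capacity estimate gives $\omega\gtrsim 1/\ln R$. Rearranging,
\[
\sup_E\log|h|\ge \omega^{-1}\bigl(-CR-CR\bigr)\ge -CR\ln R .
\]

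Combining this with $|v|\ge e^{-CR}|h|$ yields the claimed bound, with constants depending only on $n$, $C_0$ and $\|V\|_\infty$.

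The main obstacle is the harmonic-measure lower bound $\omega\gtrsim1/\ln R$ for the slit-type domain $D_{3R/2}\setminus \overline{D(R,1)}$. We plan to verify it by comparing with the explicit harmonic function
\[
\frac{\ln(3R/|\zeta-R|)}{\ln(3R)},
\]
which is at most $1$ on $\partial E$ and nonpositive-free (indeed $\ge 0$) on the outer circle, and equals $\approx \ln3/\ln(3R)$ at $0$. This gives a lower bound of the correct order. Rigor in Step 1, the passage from the $n$-dimensional distributional equation to lines, is secondary but needs care.
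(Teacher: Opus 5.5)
\textbf{The harmonic-measure bound in Step 3, as you verify it, goes the wrong way.} On $\Omega=D_{3R/2}\setminus\overline E$, your function $\psi(\zeta)=\ln(3R/|\zeta-R|)/\ln(3R)$ equals $1$ on $\partial E$. But it is strictly positive on $|\zeta|=3R/2$, since there $|\zeta-R|\le 5R/2<3R$. The maximum principle therefore gives $\omega\le\psi$, which is an \emph{upper} bound on the harmonic measure. To get $\omega\ge\psi$ you need $\psi\le 0$ on the outer circle.

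Adjusting constants does not fix this. The outer circle $|\zeta|=3R/2$ comes within distance $R/2$ of $R$, while $0$ is at distance $R$. So no function of the form $a+b\ln|\zeta-R|$ can be $1$ on $\partial E$, $\le 0$ on $|\zeta|=3R/2$, and positive at $0$.

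The estimate $\omega(0)\gtrsim 1/\ln R$ is nevertheless true. One way is to compare with the Green function of $D_{3R/2}$ with pole at $R$: it equals $\ln\frac32$ at $0$ and $\ln R+O(1)$ on $\partial E$.

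The simplest repair is different. Run Step 2 on $D_{4R}$ instead of $D_{2R}$; this only changes constants. Then replace $\Omega$ by the annulus $\{1<|\zeta-R|<3R\}$. There your $\psi$ is exactly the harmonic measure of the inner circle, so $\omega(0)=\ln 3/\ln(3R)$. Since $0$ lies on $|\zeta-R|=R$, you get
$\sup_{|\zeta-R|<1}\ln|h|\ge -2CR\ln(3R)/\ln 3$.
This is Hadamard's three-circle theorem centered at the image of $z_0$, with the middle circle through $0$. That is exactly what the paper does in Lemma~\ref{l1}.

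\textbf{With this repair your proof is correct, and it reduces to one variable differently from the paper.} The paper stays in $\mathbb C^n$. After the Bourgain--Kenig rescaling it must solve $\bar\partial\lambda=V$ on a ball with a sup-norm bound, which first requires $\bar\partial V=0$. It obtains this from $\bar\partial\log u=V$ off the zero set, the Gong--Rosay theorem that the zero set is analytic, and Demailly's removable singularity lemma. It then uses the $L^\infty$ estimates for $\bar\partial$ on balls and an $n$-dimensional three-circle lemma.

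You instead restrict to complex lines parallel to $\mathbb C z_0$. There $\bar\partial$ has no compatibility condition, and the Cauchy transform gives $|g|=O(\|V\|_{L^\infty}R)$ by an elementary computation. This bypasses Gong--Rosay, Demailly and the several-variable sup-norm estimates, and your constant does not pick up the dimensional factor $A(n)$.

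The cost is the slicing step, which is routine:
\begin{enumerate}
\item Make a unitary change of coordinates sending $z_0/R$ to $e_1$.
\item Lemma~\ref{slice} then gives the one-variable equation on $\{a+\zeta z_0/R\}$ for almost every $a$ in the complex orthogonal complement of $z_0$ with $|a|$ small.
\item On these lines $|u(a)|\ge\frac12$ by continuity, and the growth bounds are uniform in $a$.
\item Let $a\to 0$ along such $a$ and use continuity of $u$. This gives the estimate on the disc through $z_0$ itself, so no shift of the unit ball is needed.
\end{enumerate}
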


The case $n=1$ in Theorem \ref{hh} is due to Kenig, Silvestre, and Wang \cite{KSW}, who employed   the three-circle theorem together with a scaling argument of Bourgain and Kenig \cite{BK}. Extending this approach to higher dimensions requires overcoming the compatibility obstruction for the $\bar\partial$-equation. The key observation is that the potential is  $\bar\partial$-closed, making use of a result of Gong and Rosay \cite{GR}. Combined with sup-norm estimates of $\bar\partial$ and a higher-dimensional three-circle theorem, this yields the required quantitative lower bound. 

The $R\ln R$ dependence in the exponent of   Theorem~\ref{hh} is sharp up to a multiplicative constant, as demonstrated in Remark~\ref{lb} by a family of holomorphic functions for which this dependence cannot be improved. 
As a direct consequence, any solution to \eqref{eqn} that decays faster than $e^{-C|z|\ln|z|}$  must be trivial; see Corollary \ref{lan}. 

However, the rate in Corollary \ref{lan} is not optimal for the purpose of   unique continuation. Indeed,  Theorem \ref{ol} below improves upon this conclusion  by showing that sufficiently rapid exponential decay already forces triviality.   Moreover, rather than requiring the equation \eqref{eqn} on all of $\mathbb C^n$, we assume only that  it holds  on an exterior domain, for instance,   $\mathbb C^n\setminus \overline{B_R}$, where $B_R$ denotes the open ball  centered at $0$ with radius $R>0$  in $\mathbb C^n$.    The proof proceeds by slicing along a coordinate axis and then applying the similarity principle and the monotonicity of the winding number. 

\begin{theorem}\label{ol}
Suppose $u\in L^1_{{loc}}(\mathbb C^n\setminus \overline{B_R})$   satisfies
\[
    \bar\partial u=Vu\ \ \text{on}\ \ \mathbb C^n\setminus \overline{B_R}
\]
in the sense of distributions for  some   $(0,1)$-form $V\in L^\infty(\mathbb C^n\setminus \overline{B_R})$.  If   
\begin{equation}\label{dw}
     |u(z)|
    \le
    e^{-C|z|},\qquad |z|\gg1 
\end{equation}
for some $C>2\|V\|_{L^\infty(\mathbb C^n\setminus \overline{B_R})}$, then $u $  vanishes identically.
\end{theorem}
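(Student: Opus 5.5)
Following the description after Theorem~\ref{hh}, we reduce to one complex variable by slicing and then apply the similarity principle on an exterior disc, letting the winding number do the counting.

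\emph{Reduction to $n=1$.} Fix a generic point $z'\in\mathbb C^{n-1}$ and consider the slice $\zeta\mapsto u(\zeta,z')$ for $|\zeta|$ large, so that $(\zeta,z')\notin \overline{B_R}$. By Fubini, for almost every $z'$ the restriction is $L^1_{loc}$ and satisfies $\partial_{\bar\zeta}u = V_1 u$ in the distributional sense, with $|V_1|\le \|V\|_{L^\infty}$. Justifying this needs some care: one tests against product test functions $\phi(\zeta)\psi(z')$ and uses a countable dense family. The decay hypothesis \eqref{dw} gives $|u(\zeta,z')|\le e^{-C|\zeta|}$ for $|\zeta|$ large, since $|(\zeta,z')|\ge|\zeta|$. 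If each such slice vanishes, then $u=0$ almost everywhere on $\{|z_1|\gg1\}$. Running the same argument in the other coordinates, or after unitary rotations of coordinates, we get $u$ vanishing on an open subset of the connected exterior domain (for $n\ge2$). Weak unique continuation for $\bar\partial u=Vu$ (e.g. via slices and the similarity principle again) then gives $u\equiv0$. So the core is the case $n=1$ on $\{|\zeta|>\rho\}$.

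\emph{One variable.} Let $w=\partial_{\bar\zeta}u=Vu$ on $|\zeta|>\rho$ with $|V|\le M:=\|V\|_\infty$. Cut off $V$ to $\tilde V=V\chi_{\{|\zeta|>\rho\}}$ and set
\[
g(\zeta)=\frac{1}{\pi}\int\frac{\tilde V(\xi)}{\zeta-\xi}\,dA(\xi).
\]
This $g$ is not bounded, since $\tilde V$ is not integrable. Instead, localize: on each annulus $A_r=\{r-1<|\zeta|<r+1\}$, or better on a large disc, take $g_r=T(\tilde V\chi_{D(0,2r)})$, which satisfies $|g_r|\le 2M\cdot 2r$ roughly, because $\frac1\pi\int_{D(0,2r)}\frac{dA}{|\zeta-\xi|}\le 4r$. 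Then $h=e^{-g_r}u$ is holomorphic on $\rho<|\zeta|<2r$ and $|h|\le e^{-Cr+|g_r|}$ on $|\zeta|=r$. The sharp constant $2M$ demands a better choice: use $g$ with $\sup_{|\zeta|=r}|g_r|\le 2Mr+O(1)$. The classical bound $\frac1\pi\int_{D(0,R)}\frac{dA(\xi)}{|\zeta-\xi|}\le 2R$ attains this, and by choosing the disc adapted to radius $r$ we get $|g|\le 2Mr$ on $|\zeta|=r$. So $|h|\le e^{-(C-2M)r}$ on $|\zeta|=r$.

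\emph{Winding number / argument principle.} If $u\not\equiv0$, then $u=e^{g}h$ has isolated zeros with finite multiplicity, and the winding number $N(r)$ of $u$ along $|\zeta|=r$ equals that of $h$. Since $e^{g}$ is nonvanishing and continuous (Hölder), it contributes no winding change. $N(r)$ is monotone in $r$, nonincreasing as $r\to\infty$ by counting zeros. Compare $h$ with its Laurent expansion: a holomorphic function on the exterior annulus bounded by $e^{-\delta r}$ on circles of radius $r$, with the winding of $u$ bounded below uniformly, contradicts Hadamard three-circle/Laurent coefficient estimates. Concretely, $\log\max_{|\zeta|=r}|h|$ is convex in $\log r$, and the winding number gives a lower slope bound of $-N(r_0)$. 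This is incompatible with decay faster than any power once $C>2M$, which forces $h\equiv0$.

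The main obstacle is the step where $g$ (hence $h$) depends on $r$. The monotonicity of winding number must transfer the information across radii uniformly, since $g_r$ changes with $r$, and we must still land exactly at $2M$. I would handle this by fixing a large $r$, working on the annulus $\rho<|\zeta|<r$ with a single $g_r$, and using the winding count at inner radius $\rho'$ (independent of $r$). The three-circle inequality then gives $\log|h(\text{inner})|\gtrsim -N\log r-4Mr$, versus the upper bound $-(C-2M)r$ at the outer circle. Getting the constant to match exactly is the delicate computation.
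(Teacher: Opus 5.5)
\paragraph{There is a genuine gap.} Your slicing reduction to one variable matches the paper; the paper's last step is weak unique continuation from Theorem~\ref{pz}, part~1. Your use of the similarity principle and winding numbers also matches. The gap is the one-variable estimate, which carries the whole content of the theorem and which you leave as ``the delicate computation''. The route you sketch cannot produce it, for two reasons.

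\emph{The propagation step is false as stated.} The winding number of $h$ on $|\zeta|=s$ gives no lower bound on the slope of $\log\max_{|\zeta|=s}|h|$ in $\log s$. For instance, $h=e^{a/\zeta}$ with $a>0$ has winding number $0$, while $\log\max_{|\zeta|=s}|h|=a/s$ is decreasing. Since your $h_r$ changes with $r$, nothing excludes this behaviour. Also, by the argument principle $k(r_2)-k(r_1)$ counts zeros, so the winding number is nondecreasing in $r$, not nonincreasing.

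\emph{Pointwise bounds on $g_r$ lose a constant.} Let $g_r$ be the Cauchy transform of $V\chi_{\{\rho<|\xi|<r\}}$. Bounds using only $|V|\le M$ give $|g_r|\approx 2Mr$ on a fixed inner circle and $|g_r|\approx\frac{4}{\pi}Mr$ on $|\zeta|=r$. Comparing $|h_r|=|e^{-g_r}u|$ at the two radii therefore yields a contradiction only for $C>(2+\frac{4}{\pi})M$, even with a valid propagation. Your own final comparison, $-N\log r-4Mr$ against $-(C-2M)r$, would need $C>6M$.

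\paragraph{The missing idea.} Work with circular means of $\ln|u|$ instead of maxima. The paper sets $m(r)=\frac{1}{2\pi}\int_0^{2\pi}\ln|u(re^{i\theta})|\,d\theta$ and writes $\bar\partial=\frac{e^{i\theta}}{2}(\partial_r+\frac{i}{r}\partial_\theta)$. It divides the equation by $u$, averages in $\theta$ and takes real parts to get
\[
m'(r)=\frac{k(r)}{r}+\frac{1}{2\pi}\int_0^{2\pi}2\operatorname{Re}\bigl(e^{-i\theta}V(re^{i\theta})\bigr)\,d\theta\ \ge\ \frac{k(r_0)}{r}-2M .
\]
This uses only that $k$ is nondecreasing. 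Integrating gives $m(r)\ge -2Mr+O(\ln r)$, which is incompatible with $m(r)\le -Cr$ when $C>2M$. No $r$-dependent factorization is needed.

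If you want to keep your $g_r$, the same inequality follows from Jensen's formula for the holomorphic function $h_r=e^{-g_r}u$ on $\rho'<|\zeta|<r$. The circular mean of $g_r$ vanishes on $|\zeta|=r$, because all its sources lie inside that circle. On $|\zeta|=\rho'$ its modulus is at most $\frac{1}{\pi}\int_{\rho'<|\xi|<r}\frac{|V(\xi)|}{|\xi|}\,dA(\xi)\le 2M(r-\rho')$. Hence
\[
m(r)\ge m(\rho')-2M(r-\rho')+k(\rho')\ln\frac{r}{\rho'} .
\]
It is the averaging, not any pointwise control of $g_r$, that recovers the sharp constant $2M$.
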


The linear exponential scale and the exponential constant in Theorem~\ref{ol} are both optimal.   Indeed, for any $c>0$, consider the nontrivial smooth function   $u = e^{-c|z|}$ on  $\mathbb C^n\setminus \overline{B_1} $. Then $u\in L^1(\mathbb C^n\setminus \overline{B_1})$ and satisfies  $ \bar\partial u = Vu$ on   $\mathbb C^n\setminus \overline{B_1}, $   where $ V = -\sum_{j=1}^n\frac{cz_j}{2|z|}\,d\bar z_j \in L^\infty (\mathbb C^n \setminus \overline{B_1}) $ with $\|V\|_{L^\infty(\mathbb C^n\setminus \overline{B_1})}=\frac{c}{2}$, showing that the threshold $C>2\|V\|_{L^\infty(\mathbb C^n\setminus \overline{B_R})} $ is sharp.

More generally, we establish a unified unique continuation result in Theorem \ref{gen} for a broad class of radially decaying bounded potentials. The decay required of the solution is determined explicitly by the decay of the potential at infinity, and the resulting threshold is sharp. This framework includes, for example, both the bounded-potential case of Theorem \ref{ol} and logarithmically decaying potentials; see Example \ref{ex11}.

We next consider the unique continuation problem at infinity for solutions satisfying the much weaker condition of $L^2$-flatness \eqref{flatn}. 
  The following theorem establishes unique continuation   for $W^{1,2}$  solutions of  \eqref{eqn}  with $  L^2$ potentials under this assumption. Here, for a positive integer $k$ and $p\ge1$, $W^{k,p}(\Omega)$ denotes the space of functions whose weak derivatives up to order $k$ belong to $L^p(\Omega)$. The result also applies to vector-valued solutions $u=(u_1,\ldots,u_N)^T$, with $u\in W^{k,p}(\Omega)$ understood componentwise. See also Corollary \ref{non} for an application of Theorem \ref{main4n}  to    the uniqueness of    certain nonlinear equations involving the Laplacian  under  sufficiently rapid decay at infinity.

\begin{theorem}\label{main4n} 
Suppose
$u=(u_1,\ldots,u_N)^T
\in W^{1,2}(\mathbb C^n\setminus\overline{B_R})$ 
satisfies
\[
\bar\partial u=Vu
\ \ \text{on }\ \ \mathbb C^n\setminus\overline{B_R}
\]
for some $N\times N$ matrix-valued $(0,1)$-form $V\in L^2(\mathbb C^n\setminus\overline{B_R}). $ 
Assume that $u$ vanishes to infinite order at infinity in the
$L^2$ sense, namely,
\begin{equation}\label{flatn}
\lim_{r\to\infty}
r^m\int_{|z|>r}|u(z)|^2\,dv_z=0
\qquad\text{for every }m>0.
\end{equation}
Then $u$ vanishes identically.
\end{theorem}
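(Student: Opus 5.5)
We first reduce to $n=1$ by slicing along complex lines through the origin, and then handle $n=1$ by inversion followed by a Carleman estimate at the origin.

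\emph{Slicing.} For $a\in S^{2n-1}$, restrict $u$ to the line $L_a=\{\zeta a:\zeta\in\mathbb C\}$ and set $u_a(\zeta)=u(\zeta a)$ for $|\zeta|>R$. In polar-type coordinates $z=\zeta a$, the volume element is comparable to $|\zeta|^{2n-2}\,dA(\zeta)\,d\sigma(a)$, and $|\zeta|^{2n-2}\ge R^{2n-2}$ on $|\zeta|>R$. Fubini therefore gives, for a.e. $a$, three facts. First, $u_a\in W^{1,2}(\{|\zeta|>R\})$. Second, $V_a:=\sum_j\bar a_jV_j(\zeta a)\in L^2(\{|\zeta|>R\})$. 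Third, by the chain rule for Sobolev functions on a.e. line,
\[
\partial_{\bar\zeta}u_a=V_au_a .
\]

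\emph{Flatness on a.e. slice.} Put $f_a(r)=\int_{|\zeta|>r}|u_a|^2|\zeta|^{2n-2}\,dA$. Condition \eqref{flatn} with exponent $m+1$ gives
\[
\int_{S^{2n-1}}\sum_k 2^{km}f_a(2^k)\,d\sigma(a)<\infty .
\]
Hence for a.e. $a$ we have $2^{km}f_a(2^k)\to0$. Since $f_a$ is monotone, this yields $r^{m}f_a(r)\to0$ for all $r\to\infty$ with a slightly smaller exponent. Intersecting over countably many $m$, a.e. slice satisfies the one-variable version of \eqref{flatn}.

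So it suffices to prove the theorem for $n=1$: if $u_a\equiv0$ for a.e. $a$, then $u\equiv0$.

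\emph{Inversion for $n=1$.} Set $w=1/\zeta$ and $v(w)=u(1/w)$ on $0<|w|<1/R$. Then
\[
\partial_{\bar w}v=\tilde V v,\qquad \tilde V(w)=-\bar w^{-2}V(1/w).
\]
The change of variables has Jacobian $|w|^{-4}$, so
\[
\int|\tilde V|^2\,dA(w)=\int|V|^2\,dA(\zeta)<\infty .
\]
Thus $\tilde V\in L^2$ near $0$ with the same norm, and it is small on small discs. The flatness of $u$ at infinity becomes
\[
\int_{|w|<\rho}|v|^2|w|^{-4}\,dA=o(\rho^{m})\quad\text{for all }m,
\]
which is infinite-order $L^2$ vanishing of $v$ at $0$. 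Because $u\in W^{1,2}$ (so $u\in L^q_{loc}$ for all $q<\infty$), $v$ lies in $L^q$ on annuli. Combined with this flatness, one checks that the equation extends across $0$ after multiplying by cutoffs, with error terms supported in small annuli that vanish to infinite order.

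\emph{Strong unique continuation at $0$ (main obstacle).} Since $\tilde V$ is only $L^2$, the similarity principle is unavailable, because the Cauchy transform of $\tilde V$ need not be bounded. I would instead use a weighted Carleman inequality
\[
\big\||w|^{-k}f\big\|_{L^{q}}\le C\,\big\||w|^{-k}\partial_{\bar w}f\big\|_{L^{p}},\qquad \tfrac1p=\tfrac12+\tfrac1q,\ 2<q<\infty,
\]
for $f$ supported in a small disc away from $0$. The constant $C$ must be independent of $k$ ranging over, say, $k\in\mathbb N+\tfrac12$. I would prove this by writing $f=T(\partial_{\bar w}f)$ with the Cauchy transform $T$, and subtracting the Taylor polynomial of the kernel to order $k$. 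This produces a kernel bounded by a Hardy–Littlewood–Sobolev-type kernel uniformly in $k$, which is the standard mechanism behind such uniform estimates.

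\emph{Conclusion.} Apply the Carleman inequality to $f=\chi v$, with $\chi$ cutting off near $0$ (using flatness) and near $|w|=\delta$. Hölder gives
\[
\big\||w|^{-k}\tilde Vf\big\|_p\le\|\tilde V\|_{L^2(B_\delta)}\,\big\||w|^{-k}f\big\|_q .
\]
Choosing $\delta$ with $C\|\tilde V\|_{L^2(B_\delta)}<\tfrac12$ lets us absorb this term. What remains is $\big\||w|^{-k}v\big\|_{L^q(B_{\delta/2})}\lesssim (\delta/2)^{-k}\times{}$const, coming from the outer cutoff region. Letting $k\to\infty$ forces $v=0$ on $B_{\delta/2}$. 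Ordinary weak unique continuation then propagates the vanishing to the connected exterior domain, using the same Carleman/absorption argument locally. Hence $u\equiv0$. The uniform-in-$k$ $L^p\to L^q$ Carleman estimate is the step I expect to require the most care.
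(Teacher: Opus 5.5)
Your proposal is correct in its overall structure. It differs from the paper's proof in two places.

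\textbf{Slicing.} You slice along complex lines through the origin, whereas the paper slices along lines parallel to the $z_1$-axis with $|z'|<R/2$. On the exterior region the fibre weight $|\zeta|^{2n-2}$ is bounded below by $R^{2n-2}$. So the $L^2$ bounds on $u$, $\nabla u$, $V$ and the dyadic flatness argument pass to almost every slice exactly as in the paper. Because your slices exhaust $\mathbb C^n\setminus\overline{B_R}$, vanishing on almost every slice already gives $u\equiv0$ by Fubini. You therefore do not need the paper's final appeal to weak unique continuation in $\mathbb C^n$ (Theorem~\ref{pz}, part 1).

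\textbf{The case $n=1$.} The paper does three things: it applies the Harvey--Polking removable singularity theorem, checks $v\in W^{1,2}(D_{1/R})$ via conformal invariance of the Dirichlet integral, and cites the finite-point strong unique continuation theorem (Theorem~\ref{pz}, part 2). You instead reprove that theorem with a Carleman estimate. The puncture is handled by an inner cutoff whose error is killed by flatness, using $p<2$. As a result, neither the removable-singularity step nor the $W^{1,2}$ extension across $0$ is needed. The paper's route is shorter given the cited result; yours is self-contained.

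\textbf{The Carleman estimate needs correcting.} It is easier than you expect if you take $k\in\mathbb N$ rather than $k\in\mathbb N+\tfrac12$. Let $f$ be compactly supported away from $0$ and set $g=w^{-k}f$. Then $\partial_{\bar w}g=w^{-k}\partial_{\bar w}f$. Cauchy--Pompeiu and the Hardy--Littlewood--Sobolev bound for the Cauchy transform give $\||w|^{-k}f\|_{L^q}\le C_p\||w|^{-k}\partial_{\bar w}f\|_{L^p}$, with $C_p$ independent of $k$. This is exactly your Taylor subtraction: the moments $\int\xi^{-j-1}\partial_{\bar\xi}f\,dA$ vanish, so the subtracted kernel is $w^k\xi^{-k}(w-\xi)^{-1}$.

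For half-integer $k$ with the real weight $|w|^{-k}$, your claim that the subtracted kernel is dominated by $C|w-\xi|^{-1}$ is false. An extra Hardy-type piece appears, with kernel $|w|^{-1/2}|\xi|^{-1/2}$ on $\{|\xi|>|w|\}$ or on $\{|\xi|<|w|\}$. At $q=4$ the inequality fails outright: test it on $f=w^N\eta(L^{-1}\ln|w|)$ with $k=N+\tfrac12$ and let $L\to\infty$.

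With integer $k$, the rest of your argument goes through as written. This covers the absorption step, the limit $k\to\infty$, and the local propagation on each slice. For the propagation, use that $\|\tilde V\|_{L^2(D(z,\delta))}$ is uniformly small for $z$ in compact sets.
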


  When $n=1$, as shown in the proof of Theorem \ref{main4n}, the problem at infinity can be transformed by inversion into a unique continuation problem at a finite point, with the $L^2$ norm of the potential preserved. In this case, $L^2$ is the critical integrability class: for every $p>2$, there exist potentials in $L^p\setminus L^2$ for which unique continuation fails. In higher dimensions,  however, the situation  differs substantially from its finite-point counterpart. At a finite point, the critical exponent $2n$ is intrinsic to the problem, as reflected by the counterexamples in Remark~\ref{ex4}; see also \cite{PZ3}. From the viewpoint of the proof, the reduction to one dimension is carried out by complex radial slicing, which leads naturally to this dimension-dependent integrability exponent. By contrast, at infinity one may use coordinate slicing together with Fubini's theorem, so that $L^2$ integrability of the potential is preserved on almost every slice. The one-dimensional unique continuation result   can then be applied on almost every such slice, and the resulting vanishing is propagated to the full exterior domain by weak unique continuation. Thus the $L^2$  condition on the potential  in Theorem~\ref{main4n} arises naturally from the one-dimensional sliced problem.  
  
  As shown  by Example \ref{ex3}, unique continuation under the $L^2$-flatness assumption  may fail for potentials outside $L^2$ at infinity. Nevertheless, Theorem \ref{main3} identifies an important class extending beyond $L^2$ class  for which unique continuation remains valid, namely,  potentials satisfying
\(V(z)=O\left(\frac{1}{|z|}\right) 
\) at infinity.

 Furthermore, we consider compactly supported potentials on $\mathbb C^n$. In one complex dimension,    Chirka and Rosay \cite{CR} established the following uniqueness result for the $\bar\partial$-equation: if  $V\in L^\infty(\mathbb C)$ has compact support and  $u\in C^1(\mathbb C)$ satisfies $  \bar\partial u = V u$ on $\mathbb C $ and  $ \lim_{z\rightarrow \infty}u =0, $   then $u\equiv 0$.   Theorem \ref{cr}  below extends this result to higher dimensions while relaxing the assumption  $V\in L^\infty(\mathbb C)$ to $V\in L^p(\mathbb C^n)$ for some $p>2$. 

 \begin{theorem}\label{cr}
 Suppose $u \in L_{loc}^{2}(\mathbb C^n)$   satisfies \begin{equation*} 
        \bar\partial u = Vu \ \ \text{on}\ \  \mathbb C^n 
    \end{equation*}  in the sense of distributions  for some   $(0,1)$-form $V\in L^p(\mathbb C^n), p>2$.    If $V$ has compact support and  $$\lim_{z\rightarrow \infty} u(z) =0,$$ then $u$ vanishes identically. 
 \end{theorem}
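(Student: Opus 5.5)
Outside a large ball $\overline{B_{R_0}}$ containing $\operatorname{supp}V$, the equation reduces to $\bar\partial u=0$, so $u$ is holomorphic on $\mathbb C^n\setminus\overline{B_{R_0}}$. The plan has three steps: show $u$ extends to an entire function that vanishes at infinity, deduce that $u\equiv0$ near infinity, and then propagate the vanishing inward using a unique continuation result for $\bar\partial u=Vu$ with $V\in L^p$, $p>2$.

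\emph{Case $n\ge2$.} By Hartogs' extension theorem, $u|_{\mathbb C^n\setminus\overline{B_{R_0}}}$ extends to an entire function $\tilde u$. Since $\tilde u$ tends to $0$ at infinity, the maximum principle on large balls gives $\tilde u\equiv0$. Hence $u=0$ on $\mathbb C^n\setminus\overline{B_{R_0}}$.

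\emph{Case $n=1$.} Here there is no Hartogs extension, so I would instead use a similarity principle. Set $w=u\,e^{-\phi}$ with $\phi=T V$, where $T$ is the Cauchy transform $Tf(z)=-\frac1\pi\int\frac{f(\zeta)}{\zeta-z}\,dA(\zeta)$. Because $V\in L^p$ with $p>2$ and has compact support, $\phi$ is continuous (indeed H\"older), satisfies $\bar\partial\phi=V$, and $\phi(z)=O(1/|z|)\to0$ at infinity. A distributional computation then gives $\bar\partial w=e^{-\phi}(\bar\partial u-Vu)=0$, so $w$ is entire. The product rule needs justification since $u\in L^2_{loc}$ only: $e^{-\phi}\in W^{1,p}_{loc}\cap C$, and $\bar\partial u=Vu\in L^{q}_{loc}$ with $\frac1q=\frac12+\frac1p$, so $u\in W^{1,q}_{loc}$ and the product is well defined. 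Since $w\to0$ at infinity, Liouville's theorem gives $w\equiv0$, hence $u\equiv0$. In this case the argument finishes directly. Alternatively one could apply the same representation to a cutoff version of $u$, but the direct argument is cleaner.

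\emph{Propagation inward when $n\ge2$.} It remains to pass from $u=0$ near infinity to $u\equiv0$ on all of $\mathbb C^n$. For this I would use slicing. For a.e.\ $(z_2,\dots,z_n)$, Fubini's theorem shows that the restriction $u(\cdot,z_2,\dots,z_n)$ satisfies the one-variable equation $\bar\partial_{z_1}u=V_1u$ with $V_1(\cdot,z')\in L^p(\mathbb C)$ compactly supported and $u(\cdot,z')\in L^2_{loc}$. It also vanishes for large $|z_1|$, because $u=0$ outside $B_{R_0}$. The one-dimensional argument above, applied to each slice, gives $u(\cdot,z')\equiv0$ for a.e.\ $z'$, hence $u=0$ a.e.\ on $\mathbb C^n$.

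The main obstacle is the regularity bookkeeping needed to make the slicing and the similarity principle rigorous. One point is that the restriction of a distributional solution to a.e.\ slice is again a distributional solution; this follows by testing against product test functions $\chi(z_1)\psi(z')$ and using Fubini. The other point is that $e^{-\phi}u$ has the claimed derivative. Both are standard given $p>2$, which ensures $\phi$ is continuous and bounded.
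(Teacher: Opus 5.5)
Your proof is correct and is essentially the paper's argument. In one variable you form $ue^{-\lambda}$ with $\lambda$ the Cauchy transform of $V$, observe that it is entire and tends to $0$ at infinity, and conclude by Liouville; for $n\ge2$ you reduce to this case by coordinate slicing. The only difference is your preliminary Hartogs--Liouville step showing $u\equiv0$ outside $\overline{B_{R_0}}$. That step is valid but unnecessary, since each slice $u(\cdot,z')$ already tends to $0$ at infinity by hypothesis, and that is all the one-variable argument needs.
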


Note that compact support of potentials alone is not sufficient for unique continuation. Indeed, Mandache's example shows that weak unique continuation may fail for compactly supported potentials below the $L^2$ integrability threshold; see Remark~\ref{ex4}.   We also recall that, in Meshkov’s counterexample \cite{Me} to Landis’s conjecture  for \eqref{la} on $\mathbb R^2$, the complex-valued  potential vanishes on a sequence of concentric annuli with radii tending to infinity. On the other hand, holomorphic functions clearly fall within the framework of Theorem~\ref{cr} by taking $V\equiv0$. When restricted in this special case, the conclusion   also follows from the classical Liouville theorem, since  boundedness  already forces $u$ to be constant. Consequently, if in addition $u(z)\to0$ as $|z|\to\infty$, then necessarily $u\equiv0$.

 \medskip
 
  Finally, it is worth  pointing out that the conclusions of Theorems~\ref{ol} and~\ref{main4n}, although stated on complements of closed balls, remain valid for any domain that is the complement of a compact set. This follows directly  from the weak unique continuation property in Theorem \ref{pz}. By contrast, the global nature of Theorems \ref{hh} and \ref{cr} is essential, as their  conclusions may fail  when  the domain $\mathbb C^n$ is replaced by the complement of a compact set. This is already evident in one complex dimension. Indeed, for any $k_0\in \mathbb N$, the nontrivial function  $ u(z)= z^{-k_0}, |z|>1$ satisfies  $\bar\partial u =0 =Vu $ for $ |z|>1$ 
with $V\equiv 0$, and  $\lim_{z\rightarrow \infty}u(z)= 0$, showing that Theorem \ref{cr} fails in this setting. See also Remark \ref{gd} for an example on the complement of the unit disc showing that Theorem~\ref{hh} does not extend to exterior domains.

\section{Preliminaries}
 
In this section, we collect several preliminary results concerning unique continuation at a finite point, Sobolev properties for the $\bar\partial$ operator, one-dimensional inversion and coordinate slicing.  These results will be used repeatedly in the proofs of the main theorems.

We first recall a unique continuation result for the $\bar\partial$ operator at a finite point under $L^2$-flatness. A function $u\in L_{ {loc}}^2(\Omega)$ is said to vanish to infinite order, or to be $L^2$-flat, at a point $z_0\in\Omega$ if 
$$  \lim_{r\rightarrow 0} r^{-m}\int_{|z-z_0|<r}|u(z)|^2\  dv_z = 0
\qquad\text{for every }m>0. $$ 

\begin{theorem}\cite{PZ}\cite{PZ3}\label{pz}
Let $\Omega$ be a  domain in $\mathbb C^n$. Suppose $u=(u_1, \ldots, u_N)^T \in W^{1, 2}_{loc}(\Omega)$      satisfies $ |\bar\partial u|\le V|u|$ almost everywhere\ on $\Omega$ for some nonnegative measurable function $V$ on $\Omega$.  
 \begin{enumerate}
    \item  The weak unique continuation holds if $V\in L_{loc}^2(\Omega)$: if $u $ vanishes  in an open subset of $ \Omega$,   then $u$ vanishes identically. 
 \item  The (strong) unique continuation holds if  $V\in L_{loc}^2(\Omega)$ when $n=1$, or  if $V \in L_{loc}^{p}(\Omega)$ for some $p>2n$ when $n\ge 2$: if $u$ vanishes to infinite order in the $L^2$ sense at some $z_0\in \Omega$, then $u$ vanishes identically.

 \item  Suppose that $0\in \Omega$ and  $  V  \le \frac{C}{|z|}$ on $\Omega$  for some   $C>0$. Then   the (strong) unique continuation property holds if either  $N =1$, or  $N\ge 2$ and $C< \frac{1}{4 }$: if $u$  vanishes to infinite order in the $L^2$ sense at $0$, then $u $ vanishes identically.
 \end{enumerate}

 \end{theorem}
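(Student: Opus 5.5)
Theorem \ref{pz} is quoted from \cite{PZ} and \cite{PZ3}, so here we only outline how its three parts are obtained; all three go through Carleman-type estimates for $\bar\partial$ in one complex variable and a reduction of the $n$-dimensional problem to one dimension.

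\emph{One variable, $V\in L^2_{loc}$.}
1. Work in $\mathbb C$ with the weighted inequality
\[
\|\,|z|^{-k}\phi\,\|_{L^2}\lesssim \|\,|z|^{-k}\,|z|\,\bar\partial\phi\,\|_{L^2},
\]
valid for compactly supported $\phi$ with $k$ ranging over a sequence avoiding a discrete set. Its proof expands $\phi$ in Fourier modes $e^{ij\theta}$, on which $\bar\partial$ acts as a one-dimensional Euler operator.
2. The weight $|z|$ is too weak to absorb a general $L^2$ potential directly. Instead, apply the inequality to $\phi=e^{-w}u$, where $w$ solves $\bar\partial w=V$ locally.
3. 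The function $w$ lies in $W^{1,2}$, hence in BMO and every $L^q$, but it need not be bounded. This is the main obstacle.
4. To handle it, approximate $V$ by a truncation: split $V=V_1+V_2$ with $V_1$ bounded and $\|V_2\|_{L^2}$ small on a small disc, and treat $V_2$ via a Sobolev-type Carleman estimate of $L^2\to L^2$ form with weight $|z|^{-k}$ (Hardy–Littlewood–Sobolev at the critical exponent).
5. Vector-valued $u$ (matrix $V$) is handled the same way, using the scalar bound $|\bar\partial u|\le V|u|$ componentwise.
6. Flatness at $0$ then gives $u=0$ near $0$ by letting $k\to\infty$. Weak unique continuation follows from the same estimate centred at boundary points of the vanishing set.

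\emph{Several variables, part 2.}
1. Slice by complex lines through $z_0$: $\zeta\mapsto z_0+\zeta\xi$ with $|\xi|=1$.
2. Flatness in $\mathbb C^n$ gives flatness on almost every slice only after averaging in $\xi$. The integrability condition $p>2n$ ensures, via polar coordinates with Jacobian $|\zeta|^{2n-1}$ and Hölder, that the restricted potential satisfies the one-dimensional $|\zeta|^{-1}$-type bound, or lies in $L^2$ near $0$, uniformly enough.
3. The one-variable result then kills $u$ on almost every slice, hence $u=0$ near $z_0$.

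\emph{Weak unique continuation in several variables, part 1.}
1. Slice instead along coordinate lines.
2. Fubini preserves $L^2$ of $V$ and $W^{1,2}$ of $u$ on almost every line.
3. Apply the one-variable weak result on each line to propagate vanishing across the connected domain.

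\emph{Part 3, $V\le C/|z|$.*}
1. Use the Carleman inequality above directly: the right side becomes $C\|\,|z|^{-k}\phi\,\|$, which is absorbed if the constant in the estimate beats $C$.
2. For scalar $u$ one instead writes $u=e^{w}h$ with $h$ holomorphic on slices (the similarity principle) and $|w|=O(\log(1/|z|))$. Flatness of $u$ then forces $h$ to vanish to infinite order, with no smallness needed on $C$.
3. For systems no similarity principle exists. The sharp Carleman constant at half-integer $k$ gives the threshold $C<1/4$ after accounting for the $|\bar\partial u|\le V|u|$ loss.
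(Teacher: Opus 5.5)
The paper does not prove Theorem~\ref{pz}; it quotes it from \cite{PZ,PZ3}. Its only hint of method is the remark in the introduction that, at a finite point, the reduction to one variable uses complex radial slicing, which produces the exponent $2n$. Your part~2 reduction agrees with that: for a.e.\ $\xi$, H\"older against the Jacobian $|\zeta|^{2n-2}$ puts $V(z_0+\zeta\xi)$ in $L^2$ near $\zeta=0$. This works exactly when $\int_{|\zeta|<1}|\zeta|^{-2(2n-2)/(p-2)}\,dv_\zeta<\infty$, i.e.\ $p>2n$; the ``$|\zeta|^{-1}$-type bound'' plays no role.

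Your coordinate slicing for part~1 and your part~3 are right in outline. The Fourier-mode computation gives $\||z|^{-k}\phi\|_{L^2}\le \frac{2}{\operatorname{dist}(k,\mathbb Z)}\||z|^{1-k}\bar\partial\phi\|_{L^2}$. So the constant is $4$ at half-integers and absorption needs exactly $C<\frac14$. For $N=1$ the similarity principle with $|w|\le 2C\ln\frac1{|z|}+O(1)$ works.

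\textbf{The gap.} The genuine gap is the one-variable $L^2$ case, on which parts~1 and~2 rest.
\begin{itemize}
\item The integrating factor in step~2 can only remove a scalar potential. The hypothesis gives only a nonnegative majorant $V$. The actual potential, from the reduction stated right after Theorem~\ref{pz}, is $\mathcal V_{kj}=(\bar\partial u_k)\overline{u_j}/|u|^2$, an $N\times N$ matrix that no scalar $e^{-w}$ removes. As you yourself note in part~3, there is no similarity principle for systems.
\item For $N=1$ the device is overkill. With $\bar\partial w=\mathcal V$, $e^{-w}u$ is simply holomorphic, so no Carleman estimate is needed. The unboundedness of $w$ is also harmless: $w\in W^{1,2}$ is VMO with $|w_{D_r}|=o(\ln\frac1r)$, so John--Nirenberg transfers flatness to the holomorphic factor.
\item Step~5 does not rescue systems. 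The inequality $|\bar\partial u|\le V|u|$ gives $|\bar\partial u_j|\le V|u|$, not $|\bar\partial u_j|\le V|u_j|$.
\item The estimate in step~4 is misstated. No weighted $L^2\to L^2$ inequality can absorb a general $L^2$ potential, since multiplication by $V\in L^2$ does not act on $L^2$.
\end{itemize}

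\textbf{The fix.} What works is the critical-exponent estimate you allude to, used alone. Fix $2<q<\infty$ and $\frac1s=\frac1q+\frac12$. Take an integer $m\ge0$ and a vector-valued $\phi$ with compact support, flat at $0$. Because $z^{-m}$ is holomorphic, $z^{-m}\phi$ is the Cauchy transform of $z^{-m}\bar\partial\phi$, and Hardy--Littlewood--Sobolev gives
\[
\||z|^{-m}\phi\|_{L^q}\le C_q\,\||z|^{-m}\bar\partial\phi\|_{L^s}
\]
with $C_q$ independent of $m$. Take $\phi=\chi u$, with $\chi$ a cutoff supported in $D_\epsilon$ and equal to $1$ on $D_{\epsilon/2}$. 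H\"older gives $\||z|^{-m}\chi\,\bar\partial u\|_{L^s}\le\|V\|_{L^2(D_\epsilon)}\||z|^{-m}\phi\|_{L^q}$. This term is absorbed once $\epsilon$ is small, because $\|V\|_{L^2(D_\epsilon)}\to0$. The term with $\bar\partial\chi$ lives on $|z|\ge\epsilon/2$, so letting $m\to\infty$ kills $u$ near $0$. No splitting of $V$ is needed, since a bounded piece is small in $L^2$ on small discs anyway. No integrating factor is needed, and systems cost nothing.

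Remaining details you need to supply:
\begin{itemize}
\item You need $L^q$-flatness, so that $|z|^{-m}\chi u\in L^q$ and $\bar\partial(z^{-m}\chi u)=z^{-m}\bar\partial(\chi u)$ holds across $0$. It follows from the same estimate at scale $r$: $\|u\|_{L^q(D_r)}\lesssim r^{\frac2q-1}\|u\|_{L^2(D_{2r})}$.
\item For weak continuation, centre the weight at a point of the vanishing set close to a boundary point. Do not centre it at the boundary point itself, where $u$ need not be flat.
\item In the radial slicing for $n\ge2$, the slices need not be $W^{1,2}$ at $\zeta=0$, because the Jacobian degenerates. So the one-variable argument must be run on the punctured disc, with flatness removing the puncture.
\item Flatness on a.e.\ slice needs a dyadic Tonelli argument, as in the proof of Theorem~\ref{main4n}.
\item Part~3 for $n\ge2$, which you do not address, reduces in the same way, since $|\partial_{\bar\zeta}[u(\zeta\xi)]|\le C|\zeta|^{-1}|u(\zeta\xi)|$ for $|\xi|=1$.
\end{itemize}
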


  We note that   every function $u=(u_1, \ldots, u_N)^T\in W^{1,1}_{loc}(\Omega)$ satisfying $$ |\bar\partial u| \le V|u|\ \ \text{a.e. on}\ \ \Omega$$ with  $V \in L^p_{loc}(\Omega)$  can be viewed as    a weak solution of  a first order  Schr\"odinger-type system for  $\bar\partial$. Indeed, fix a measurable representative of $u$ and let $Z=\{u=0\}$. The set $Z$ is well defined up to a null set. By the standard Stampacchia property that the weak gradient of a Sobolev function vanishes almost everywhere on each of its level sets, $\bar\partial u=0$ almost everywhere on $Z$.  Define the \(N\times N\) matrix of \((0,1)\)-forms $\mathcal V=(\mathcal V_{kj}) $ with \(\mathcal V_{kj}=0\) on $Z$ and   $\mathcal V_{kj}:=
\frac{(\bar\partial u_k)\overline{u_j}}{|u|^2}$ otherwise. Then \(\mathcal V_{kj}\in L^p_{ {loc}}(\Omega )\), and \[ \bar\partial u=\mathcal V u \quad \text{a.e. on } \ \Omega. \]   On the other hand, if $f\in L_{loc}^1(\Omega)$ and $u\in W_{loc}^{1,1}(\Omega)$, then  
$$ \bar\partial u = f\qquad \text{on}\ \ \Omega$$
in the sense of distributions is equivalent to 
$$\bar\partial u = f\qquad \text{a.e. on}\ \ \Omega.  $$
Thus, results for the system $\bar\partial u=\mathcal V u$ apply
equally to the differential inequality
$|\bar\partial u|\le V|u|$ through the above reduction. We shall also
freely pass between the almost-everywhere and distributional
formulations whenever the relevant quantities belong to
$L_{ {loc}}^1$. For more results concerning  unique continuation   for the inequality $|\bar\partial u| \le V|u|$ at a finite point, see, for instance, \cite{PZ, PZ2, PZ3, Shi}. 


\begin{remark} \label{ex4} The unique continuation    fails in general   in the following situations.  
    \begin{enumerate}
\item The weak unique continuation may fail if the potential does not belong to $L_{loc}^{2}$. Indeed, by  an example of Mandache \cite{Ma02}, for every $0<p<2,$ there exist a nontrivial smooth   function  $u$ on $\mathbb C$, supported in the unit disc, and a $(0,1)$-form  $V\in L^p(\mathbb C) $ such that  $ \bar\partial u =  Vu$ on $\mathbb C$.

        \item  The strong unique continuation   may fail if the potential does not belong to $L_{loc}^{2n}$. Indeed,   
for each $1\le p<2n$, choose $\epsilon  \in (0, \frac{2n}{p}-1)$. The   function $u= e^{-\frac{1}{|z|^\epsilon}}$ extended by $u(0)=0$ is smooth and   vanishes to infinite order  at $0$. Moreover, 
$  |\bar\partial u| \le V|u|: = \frac{\epsilon }{2 |z|^{\epsilon+1}}|u|$ on $ B_1\subset \mathbb C^n.$  By the choice of $\epsilon$, we have  $ V \in L^p(B_1)$.

 \item The   unique continuation  property  in Theorem \ref{pz} part 3 may also fail  in general if $N\ge 2$ and  $C$ is sufficiently large,  as shown by   examples  of the first author and  Wolff  \cite{PW98}, and of  Alinhac and Baouendi \cite{AB94}; see also \cite[Example 5]{PZ2}. More recently,  Chen, Fan and Tang \cite{CFT} showed that $C=\frac{1}{4}$ is the sharp threshold for unique continuation.  More precisely, they constructed  a nontrivial smooth  function that vanishes to infinite order  at $0$ and satisfies  $ |\bar\partial u |\le \frac{|u|}{4|z|} $ almost everywhere on $\mathbb C^n$.
     \end{enumerate}
\end{remark}
  
\medskip

Regarding regularity,    $\bar\partial$ is a first-order  elliptic operator and improves local Sobolev $W^{k,p}, 1<p<\infty$ regularity precisely by one order; see \cite[Lemma 3.1]{PZ3}. Combining this property  with a bootstrap argument yields the following regularity result for solutions of \eqref{eqn} with $L^p$ potentials, $p>2n$.  
 Throughout the paper, whenever a Sobolev function admits a continuous representative. 
we always identify the Sobolev equivalence class with this representative. All subsequent pointwise statements concerning such functions are understood in this sense.

\begin{lem}\label{bs}
Let \(\Omega\) be a domain in \(\mathbb C^n\), \(n\ge 1\), and let
\(u=(u_1,\ldots,u_N)^T\) satisfy
\[
\bar\partial u=Vu\qquad \text{on}\ \ \Omega
\]
in the sense of distributions for some measurable matrix-valued 
\((0,1)\)-form $V$.

\begin{enumerate}
\item If \(u\in L^2_{ {loc}}(\Omega)\) and
\(V\in L^p_{ {loc}}(\Omega)\) for some \(2n<p<\infty\), then
 \(
u\in W^{1,p}_{ {loc}}(\Omega).
\)

\item If \(u\in L^1_{ {loc}}(\Omega)\) and
\(V\in L^\infty_{ {loc}}(\Omega)\), then
\(u\in W^{1,q}_{ {loc}}(\Omega)\) 
for every $1\le q<\infty.$ 
 
\end{enumerate}
In particular, in either case we have \(u\in C(\Omega)\).
\end{lem}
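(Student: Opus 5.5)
The plan is a local bootstrap. Fix a point and work on a pair of concentric balls $B'\Subset B\Subset\Omega$, with a cutoff $\chi\in C_c^\infty(B)$ equal to $1$ on $B'$. Whenever $u\in L^q(B)$ and $\bar\partial u=Vu$ in distributions, the form $\bar\partial(\chi u)=\chi Vu+(\bar\partial\chi)u$ is compactly supported. By the one-order gain for $\bar\partial$ (\cite[Lemma 3.1]{PZ3}), if $\bar\partial w\in L^s$ with $w$ compactly supported and $1<s<\infty$, then $w\in W^{1,s}$. Note that we only use the gain of one derivative for $\bar\partial$ of a compactly supported function, which is the content of that lemma. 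Iterating on a shrinking sequence of balls then yields the claims locally, and hence on all of $\Omega$.

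\textbf{Part 1.} Since $V\in L^p$ and $u\in L^{q}$, Hölder gives $Vu\in L^{s}$ with $\frac1s=\frac1p+\frac1q$. Also $(\bar\partial\chi)u\in L^q\subset L^s$ locally. Hence $\chi u\in W^{1,s}$ provided $s>1$. Starting from $q_0=2$, we have $\frac1{s_0}=\frac1p+\frac12<1$, so $s_0>1$.

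Sobolev embedding then gives $u\in L^{q_1}$ with $\frac1{q_1}=\frac1{s_0}-\frac1{2n}=\frac12-\left(\frac1{2n}-\frac1p\right)$. The gain $\delta:=\frac1{2n}-\frac1p>0$ is fixed because $p>2n$. So after finitely many steps $\frac1{q_k}=\frac12-k\delta$ drops below $\frac1{2n}$, giving $s_k>2n$. At that stage Morrey gives $u\in L^\infty_{loc}$.

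One more step then gives $Vu\in L^p_{loc}$, so $u\in W^{1,p}_{loc}$. At the final step we should cap $s\le p$ and handle the case $s_k=2n$ exactly by perturbing $q$ slightly downward. This is the only bookkeeping point needing care.

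\textbf{Part 2.} Here $V\in L^\infty_{loc}$ and $u\in L^1_{loc}$, so $Vu\in L^1$ only. The $L^1$ endpoint is not covered by the one-order gain, and this is the main obstacle. I would bypass it using the fundamental solution. Write $\chi u=T(\bar\partial(\chi u))$, where $T$ is the Bochner–Martinelli-type solution operator (or, for $n=1$, the Cauchy transform applied to the $d\bar z$ component). Its kernel is $O(|z|^{1-2n})$, which lies in $L^{r}_{loc}$ for $r<\frac{2n}{2n-1}$. Young's inequality then gives $\chi u\in L^{r}$ for every such $r>1$.

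To justify the representation $\chi u=T\bar\partial(\chi u)$ for a distribution, note that $\chi u-T\bar\partial(\chi u)$ is compactly supported modulo an $L^1_{loc}$ function decaying at infinity. It is annihilated by $\bar\partial$, hence holomorphic. By Liouville it is zero.

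Now we are in the $L^r$ regime with $r>1$, and the Part 1 iteration applies with $p=\infty$. Each step improves $\frac1q$ by $\frac1{2n}$ until $u\in L^\infty_{loc}$. Then $Vu\in L^\infty_{loc}\subset L^q_{loc}$ for all $q<\infty$, so $u\in W^{1,q}_{loc}$ for every $1\le q<\infty$. The case $q=1$ is trivial from the others on bounded sets.

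Continuity in either case follows from Morrey's embedding $W^{1,q}_{loc}\subset C$ for $q>2n$.
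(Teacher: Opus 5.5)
Your proposal is correct and follows essentially the paper's route. Part 1 is a bootstrap via the one-derivative gain of $\bar\partial$. Part 2 first obtains an $L^{q_0}_{loc}$ bound with $q_0>1$ by applying an integral solution operator to $L^1$ data, then reruns the bootstrap; you use a cutoff with the Bochner--Martinelli/Cauchy kernel and Young's inequality, while the paper uses the Lieb--Michel operator $T_0$ on a ball together with holomorphy of $u-T_0(Vu)$. One small bookkeeping slip: $s_k>2n$ requires $\frac1{q_k}<\frac1{2n}-\frac1p$, not $\frac1{q_k}<\frac1{2n}$, but this is harmless because each step lowers $\frac1{q_k}$ by the fixed amount $\frac1{2n}-\frac1p$, so the iteration simply continues until that threshold is reached.
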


\begin{proof}
We first prove part 1. By H\"older's inequality,  $Vu\in L_{loc}^\frac{2p}{p+2}(\Omega)$ with $ \frac{2p}{p+2}> 1$. Since the $\bar\partial$ operator  improves local regularity precisely by one, it follows that $u\in W^{1, \frac{2p}{p+2}  }_{loc}(\Omega)$.   By the Sobolev embedding theorem, $u\in L_{loc}^{ \frac{2np}{np+2n-p}}(\Omega)$. A standard bootstrap argument eventually gives $u\in L_{loc}^{\tilde p}(\Omega)$ for some $\tilde p> \frac{2np}{p-2n}$. Consequently, $Vu\in L_{loc}^{q}(\Omega)$ for some  $q>2n$.  Elliptic regularity then gives $u\in W^{1, q}_{loc}(\Omega)$.  Since $q>2n$, the Sobolev embedding theorem also gives    $u\in C(\Omega)$ and hence $Vu\in L^{p}_{loc}(\Omega)$.   A final application of the local elliptic regularity of 
$\bar\partial$ yields $u\in W_{ {loc}}^{1,p}(\Omega).$ 

For  part 2, since \(V\in L^\infty_{ {loc}}(\Omega)\),
we have \(Vu\in L^1_{ {loc}}(\Omega)\). The local \(L^p\) estimates
for \(\bar\partial\) then gives \(u\in L^{q_0}_{ {loc}}(\Omega)\) for some
\(q_0>1\). Indeed, this follows from the $L^p$ estimates of the solution operator \(T_q\) of $\bar\partial$ in \cite[p.~86, Proposition 4.24]{LM}, applied with \(q=0\) and \(p=1\), together with the fact that
\( u-T_0(Vu) \)
is holomorphic. Consequently, 
\(Vu\in L^{q_0}_{\mathrm{loc}}(\Omega)\). Applying the same bootstrap argument
as above, we obtain 
\(u\in W^{1,q}_{ {loc}}(\Omega)\) for every \(q<\infty\). Hence
\(u\in C(\Omega)\).
\end{proof}

Although the chain rule and/or product rule do not hold in general for Sobolev functions without additional assumptions, we identify a class of Sobolev functions for which they are valid. Since this result will be used repeatedly throughout the paper, we provide a proof below.  

\begin{lem}\label{pc}
    Let $\Omega$ be a domain in $\mathbb R^d, d\ge 2$. Suppose that  $u\in W^{1, p}_{loc}(\Omega)$ and $\lambda\in W^{1, q}_{loc}(\Omega)$ for some $p\ge 1$ and   $q>d$. Then $ ue^\lambda\in W^{1, s}_{loc}(\Omega)$ with $s=\min\{p, q\}$, and 
    \begin{equation}\label{cp}
        \nabla (ue^\lambda) = e^\lambda\nabla u  + ue^\lambda \nabla \lambda \ \ \text{on}\ \ \Omega
    \end{equation} 
    in the sense of distributions.
\end{lem}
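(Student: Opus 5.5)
Since $q>d$, Morrey's embedding gives $\lambda\in C(\Omega)$, so $e^\lambda$ is locally bounded and continuous. The plan is therefore a localize-and-approximate argument. Fix a relatively compact subdomain $\Omega'\Subset\Omega$ and work there. Mollify to obtain $\lambda_\epsilon=\lambda*\rho_\epsilon$ and $u_\epsilon=u*\rho_\epsilon$, which are smooth on $\Omega'$ for small $\epsilon$. Then $\lambda_\epsilon\to\lambda$ in $W^{1,q}(\Omega')$ and, by Morrey's inequality, uniformly on $\Omega'$. Also $u_\epsilon\to u$ in $W^{1,p}(\Omega')$. For the smooth functions the classical product and chain rule give
\[
\nabla(u_\epsilon e^{\lambda_\epsilon})=e^{\lambda_\epsilon}\nabla u_\epsilon+u_\epsilon e^{\lambda_\epsilon}\nabla\lambda_\epsilon .
\]

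Next we pass to the limit term by term in $L^1(\Omega')$, which is enough for convergence in distributions. Since $\lambda_\epsilon\to\lambda$ uniformly on $\Omega'$, the functions $e^{\lambda_\epsilon}$ are uniformly bounded and converge uniformly to $e^\lambda$. Hence $u_\epsilon e^{\lambda_\epsilon}\to ue^\lambda$ in $L^p(\Omega')$, and $e^{\lambda_\epsilon}\nabla u_\epsilon\to e^\lambda\nabla u$ in $L^p(\Omega')$.

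The delicate term is $u_\epsilon e^{\lambda_\epsilon}\nabla\lambda_\epsilon$, because $u$ is only $W^{1,p}$ with possibly $p=1$. The Sobolev embedding gives $u\in L^{p^*}$ with $p^*=dp/(d-p)$ when $p<d$. The requirement $\frac1{p^*}+\frac1q\le\frac1s$ holds: if $s=p$ it reads $\frac1{p^*}+\frac1q\le\frac1p$, i.e. $\frac1q\le\frac1d$, which is true since $q>d$. If $s=q<p$, we use $u\in L^{r}$ with $r$ large; when $p\ge d$, $u\in L^r_{loc}$ for all $r<\infty$, and when $p<d$ we have $p^*\ge p$. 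Thus Hölder's inequality gives $u_\epsilon\nabla\lambda_\epsilon\to u\nabla\lambda$ in $L^s(\Omega')$, and in particular in $L^1$, using that $u_\epsilon\to u$ in $L^{p^*}$ (or $L^r$) and $\nabla\lambda_\epsilon\to\nabla\lambda$ in $L^q$.

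Passing to the limit yields the distributional identity \eqref{cp} on $\Omega'$. The right-hand side lies in $L^s(\Omega')$ by the Hölder bound above, together with $e^\lambda\nabla u\in L^p\subset L^s$ locally. Since $ue^\lambda\in L^s$ as well, we get $ue^\lambda\in W^{1,s}(\Omega')$. Exhausting $\Omega$ by such subdomains completes the argument. The main obstacle is the mixed term $u\nabla\lambda$, and more precisely checking the Hölder exponents in the borderline cases $p=1$ and $p\ge d$. This is handled entirely by Sobolev embedding for $u$ combined with the strict inequality $q>d$.
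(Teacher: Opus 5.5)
Your argument is correct and is essentially the paper's: you approximate, use the smooth product rule, and control the mixed term $u\nabla\lambda$ by Sobolev embedding of $u$ plus H\"older with $q>d$. The paper mollifies only $u$ and invokes the Sobolev chain rule $\nabla e^\lambda=e^\lambda\nabla\lambda$ instead of also mollifying $\lambda$, which is a cosmetic difference. One bookkeeping slip: when $q\le p$ (so $s=q$), knowing $u\in L^r_{loc}$ for all finite $r$ does not put $u\nabla\lambda$ in $L^q$. You need $u\in L^\infty_{loc}$, which holds by Morrey because in that case $p\ge q>d$; this is exactly how the paper argues.
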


\begin{proof}
   Since $\lambda\in W^{1, q}_{loc}(\Omega)$ for some   $q>d$, the Sobolev embedding theorem implies that $\lambda\in C(\Omega)$. In particular, $\lambda$ and  $e^\lambda$ are  locally bounded in $\Omega$. Thus  by the Sobolev chain rule (see, for instance, \cite[ p. 48]{Zie}), $e^\lambda\in W^{1, q}_{loc}(\Omega)$ and
  \begin{equation}\label{ew}
      \nabla e^{ \lambda} =  e^{ \lambda} \nabla \lambda \ \ \text{on}\ \ \Omega
  \end{equation} 
  in the sense of distributions.    

 Let $\phi\in C_c^\infty(\Omega)$ and $U\Subset \Omega$ be a bounded smooth domain  containing $ supp \ \phi$. Let $u_j(\in C^\infty(U))\rightarrow u$ in $W^{1, p}(U)$ norm.  
 Then 
 \begin{equation*}
     \begin{split}
        - \langle ue^{\lambda}, \nabla \phi \rangle =   \lim_{j\rightarrow \infty} - \langle u_je^{\lambda}, \nabla \phi \rangle =   \lim_{j\rightarrow \infty} - \langle e^{\lambda}, (\nabla \phi) u_j\rangle  = \lim_{j\rightarrow \infty} - \langle e^{\lambda}, \nabla( \phi u_j)\rangle +  \lim_{j\rightarrow \infty} \langle e^{\lambda}, \phi\nabla u_j\rangle
     \end{split}
 \end{equation*}
Since  $\phi u_j\in C_c^\infty(\Omega)$, one deduces from \eqref{ew} that    $$ \langle e^{\lambda}, \nabla( \phi u_j)\rangle =  -\langle e^{\lambda}\nabla \lambda,  \phi u_j\rangle.$$  Making use of  the boundedness of $e^{\lambda}$ on $U$ and the fact that $ u_j\rightarrow u $ in $ L^{\frac{d}{d-1}}(U)$ norm due to the continuous embedding of $W^{1, p}(U)$ into $ L^{\frac{d}{d-1}}(U)$, we have
\begin{equation*} 
     \lim_{j\rightarrow \infty} - \langle e^{ \lambda}, \nabla( \phi u_j)\rangle = \lim_{j\rightarrow \infty}  \langle e^{ \lambda} \nabla \lambda ,  \phi u_j\rangle =  \langle u e^{ \lambda}\nabla \lambda,  \phi \rangle.
\end{equation*}
Similarly, 
$$\lim_{j\rightarrow \infty} \langle e^{ \lambda}, \phi\nabla u_j\rangle =\langle  e^{ \lambda}\nabla u,   \phi \rangle.  $$ 
 Altogether, we obtain the desired equality \eqref{cp}.

 It remains to verify the claimed regularity. If $p\ge q$, then
$p\ge q>d$, so the Sobolev embedding theorem gives
$u\in L_{  {loc}}^\infty(\Omega)$. Hence both terms on the
right-hand side of \eqref{cp} belong to $L_{  {loc}}^q(\Omega)$.
Suppose instead that $p<q$. The Sobolev embedding theorem gives
$u\in L_{  {loc}}^{\frac{dp}{d-p}}(\Omega)$ if $p<d$, and $u\in    L_{loc}^{ \tilde p}(\Omega)$ for all $\tilde p<\infty$ otherwise. Since $q>d$, it follows that $u \in L_{  {loc}}^{\frac{qp}{q-p}}(\Omega)$. H\"older's inequality yields
 $u\nabla\lambda\in L_{  {loc}}^p(\Omega).$ 
Since $e^\lambda$ is locally bounded, both terms on the right-hand
side of \eqref{cp} belong to $L_{  {loc}}^p(\Omega)$. Therefore,  in either case, $ue^\lambda\in   W^{1, s}_{loc}(\Omega)$.
 \end{proof}

We shall also use two preparatory lemmas. The first concerns the one-dimensional inversion that transforms a unique continuation problem at infinity into a local problem near a finite point, while the second is a coordinate-slicing lemma used to reduce higher-dimensional problems to one complex dimension.

\begin{lem}\label{rev}
 Suppose $u=(u_1, \ldots, u_N)^T\in W^{1, p}_{loc}(\mathbb C\setminus \overline{D_R}),\  p\ge 1 $ satisfies 
 $$ \bar\partial u = Vu\ \ \text{a.e. on}\ \ \mathbb C\setminus \overline{D_R} $$
  for some matrix-valued $(0,1)$-form $V\in L^2_{loc}(\mathbb C\setminus \overline{D_R})$. Define $ v(w): = u(\frac{1}{w}) $ and $W(w): = -\frac{V(\frac{1}{w})}{ \bar w^2}, w\in D_{\frac{1}{R}}\setminus\{0\}$. Then $v\in W^{1, p}_{loc}(  {D_\frac{1}{R}}\setminus \{0\})$ and $W\in L^2_{loc}({D_\frac{1}{R}}\setminus \{0\})$, with 
$$\bar\partial v = Wv   \ \ \text{a.e. on}\ \   D_{\frac{1}{R}}\setminus\{0\}.$$   \end{lem}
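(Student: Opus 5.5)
The plan is to treat the inversion $\iota(w)=1/w$ as a smooth diffeomorphism from $D_{\frac1R}\setminus\{0\}$ onto $\mathbb C\setminus\overline{D_R}$, and then check three things in turn: the Sobolev regularity of $v=u\circ\iota$, the local integrability of $W$, and the equation for $v$.

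\textbf{Regularity of $v$.} Fix a compact set $K\Subset D_{\frac1R}\setminus\{0\}$. Its image $\iota(K)$ is a compact subset of $\mathbb C\setminus\overline{D_R}$. On $K$, both $\iota$ and $\iota^{-1}$ are smooth with all derivatives bounded, and the Jacobian $|w|^{-4}$ is bounded above and below. By the standard invariance of $W^{1,p}_{loc}$ under $C^1$ diffeomorphisms with bounded derivatives, $v\in W^{1,p}_{loc}(D_{\frac1R}\setminus\{0\})$. In addition, the usual chain rule holds almost everywhere:
\[
\partial_{\bar w}v(w)=(\partial_z u)(\tfrac1w)\,\partial_{\bar w}\!\left(\tfrac1w\right)+(\partial_{\bar z}u)(\tfrac1w)\,\partial_{\bar w}\!\left(\overline{\tfrac1w}\right).
\]
One way to justify this is to approximate $u$ by smooth $u_j\to u$ in $W^{1,p}$ on a neighborhood of $\iota(K)$. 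Then pass to the limit, using that composition with $\iota$ is bounded on $L^p$ and $W^{1,p}$ over these compact sets.

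\textbf{Computing the equation.} Since $1/w$ is holomorphic, its $\bar w$-derivative vanishes. Since $\overline{1/w}=1/\bar w$, we have $\partial_{\bar w}(1/\bar w)=-1/\bar w^2$. Therefore
\[
\partial_{\bar w}v(w)=-\frac{1}{\bar w^2}(\partial_{\bar z}u)(\tfrac1w)
\]
almost everywhere. The map $\iota$ sends null sets to null sets, so the relation $\bar\partial u=Vu$, valid a.e.\ in $z$, remains valid a.e.\ after substituting $z=1/w$. This gives
\[
\bar\partial v=-\frac{V(\frac1w)}{\bar w^2}\,v=Wv \quad\text{a.e.}
\]
Here the identification of $(0,1)$-forms is through the coefficient of $d\bar w$.

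\textbf{Integrability of $W$.} On $K$ the factor $|w|^{-4}$ is bounded. Changing variables with $dv_z=|w|^{-4}dv_w$ gives
\[
\int_K|W|^2\,dv_w=\int_K|V(\tfrac1w)|^2|w|^{-4}\,dv_w=\int_{\iota(K)}|V(z)|^2\,dv_z<\infty.
\]
So $W\in L^2_{loc}$. In fact this computation shows the $L^2$ norm is exactly preserved, which the paper uses later.

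\textbf{Main obstacle.} The only step with real content is justifying the chain rule for a Sobolev function composed with a diffeomorphism when $p$ may equal $1$. I would handle it by the smooth-approximation argument sketched above, citing the standard change-of-variables result for Sobolev spaces (for instance \cite{Zie}).
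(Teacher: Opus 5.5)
Your proposal is correct and matches the paper's proof: you treat $w\mapsto 1/w$ as a diffeomorphism that is bi-Lipschitz on compact sets away from $0$, you use the Sobolev chain rule (citing \cite{Zie}) to get $v\in W^{1,p}_{loc}$ with $\bar\partial v=-\bar w^{-2}(\bar\partial u)(1/w)$, and you obtain $W\in L^2_{loc}$ by a change of variables. You also make explicit two points the paper leaves implicit: the inversion preserves null sets, and the $L^2$ norm of the potential is exactly preserved, a fact the paper later uses in the proof of Theorem \ref{main4n}.
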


\begin{proof}
The inversion $ F(w)=\frac1w$ is a smooth diffeomorphism from
$D_{\frac{1}{R}}\setminus\{0\}$ onto
$\mathbb C\setminus\overline{D_R}$ and is bi-Lipschitz on compact
subsets away from $0$. Hence  $W\in L_{ {loc}}^2
(D_{\frac{1}{R}}\setminus\{0\}).$  Moreover, the standard chain rule for Sobolev functions (see \cite[ p. 52]{Zie}) gives
\[
v=u\circ F
\in W_{{loc}}^{1,p}
(D_{\frac{1}{R}}\setminus\{0\}),
\]
and
$$
\bar\partial_w v(w)
=
-\frac{1}{\bar w^2}
(\bar\partial_z u)\left(\frac1w\right) = W(w)v(w)\ \ \text{a.e. on}\ \   D_{\frac{1}{R}}\setminus\{0\}.
$$
\end{proof}

\begin{lem}\label{slice}
Let $\Omega\subset\mathbb C^n$, $n\ge2$, be a domain, and write
$z=(z_1,z')\in\mathbb C\times\mathbb C^{n-1}$. For $z'\in\mathbb C^{n-1}$, set $ \Omega_{z'}:=\{\zeta\in\mathbb C:(\zeta,z')\in\Omega\}. $
Suppose that $u\in L^p_{  loc}(\Omega),\ f\in L^q_{  loc}(\Omega)$ for some $p, q\ge 1$ and that
\[
\bar\partial_{z_1}u=f\ \ \text{on}\ \ \Omega
\]
in the sense of distributions. Then, for almost every $z'$, $u(\,\cdot\,,z')\in L^p_{  loc}(\Omega_{z'}),\,f(\,\cdot\,,z')\in L^q_{  loc}(\Omega_{z'}) $
and
\begin{equation}\label{se}
    \bar\partial_\zeta u(\zeta,z') 
=
f(\zeta,z') \ \ \text{on}\ \ \Omega_{z'}
\end{equation}
  in the sense of distributions. If, in addition, $u\in W^{1,p}_{  loc}(\Omega)$, then $ u(\,\cdot\,,z')\in W^{1,p}_{  loc}(\Omega_{z'})$ for almost every $z'$.
\end{lem}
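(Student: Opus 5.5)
The plan is to use Fubini's theorem together with a countable family of test functions of product form. We need the slice equation \eqref{se} for almost every $z'$, tested against all $\psi\in C_c^\infty(\Omega_{z'})$. Since the exceptional null set must not depend on $\psi$, we reduce to a countable dense family.

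\emph{Local integrability on slices.} Take a countable exhaustion of $\Omega$ by products of discs. Concretely, consider all polydiscs $P=D\times D'\Subset\Omega$, where $D\subset\mathbb C$ and $D'\subset\mathbb C^{n-1}$ are discs and polydiscs with rational centers and radii. Since $u\in L^p(P)$, Fubini gives $\int_{D'}\int_D|u(\zeta,z')|^p\,dv_\zeta\,dv_{z'}<\infty$. Hence $u(\cdot,z')\in L^p(D)$ for a.e.\ $z'\in D'$, and likewise $f(\cdot,z')\in L^q(D)$. Discarding a countable union of null sets, we obtain a full-measure set $E$ of $z'$ with the following property: for every rational $P$ with $z'\in D'$, both slices are integrable on $D$. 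Any compact $K\subset\Omega_{z'}$ is covered by finitely many such $D$, since $\{z'\}\times K$ is compact in $\Omega$ and each of its points lies in a rational polydisc. This gives $u(\cdot,z')\in L^p_{loc}(\Omega_{z'})$ and $f(\cdot,z')\in L^q_{loc}(\Omega_{z'})$.

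\emph{The equation on slices.} Fix a rational polydisc $P=D\times D'$ and a countable set $\{\psi_k\}\subset C_c^\infty(D)$ that is dense in the $C^1$ topology; for instance, take rational-coefficient polynomials multiplied by a fixed countable family of cutoffs. For any $\chi\in C_c^\infty(D')$, test the distributional equation with $\psi_k(\zeta)\chi(z')$ and apply Fubini to get
\[
\int_{D'}\chi(z')\Big(\int_D u(\zeta,z')\,\partial_{\bar\zeta}\psi_k(\zeta)\,dv_\zeta+\int_D f(\zeta,z')\psi_k(\zeta)\,dv_\zeta\Big)dv_{z'}=0 .
\]
(We use the sign convention $\langle\bar\partial u,\phi\rangle=-\langle u,\bar\partial\phi\rangle$.) The inner bracket is an $L^1(D')$ function of $z'$, and $\chi$ is arbitrary, so the bracket vanishes for a.e.\ $z'\in D'$. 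Discarding countably many null sets over $k$ and $P$ yields a full-measure set on which the identity holds for all $\psi_k$ and all $P$. Density extends it to all $\psi\in C_c^\infty(D)$, because the slices are in $L^1(D)$ and convergence $\psi_{k_j}\to\psi$ in $C^1$ with supports in a fixed compact subset suffices. A partition of unity subordinate to the cover of $\operatorname{supp}\psi$ by such discs then gives \eqref{se} for arbitrary $\psi\in C_c^\infty(\Omega_{z'})$.

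\emph{The Sobolev statement.} If $u\in W^{1,p}_{loc}(\Omega)$, apply the same argument to each weak derivative identity $\partial_{x_1}u=g_1$ and $\partial_{y_1}u=g_2$, with $g_i\in L^p_{loc}$. This shows that the slice derivatives are the slices of $g_i$, which are in $L^p_{loc}$ for a.e.\ $z'$. Alternatively, one can cite the ACL characterization of Sobolev functions.

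The main obstacle is the step where the $z'$-exceptional set must be independent of the test function. This is handled by the countable dense family of test functions together with the countable rational polydisc cover. Everything else is routine Fubini.
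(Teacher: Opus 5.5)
Your proof is correct and follows the paper's argument. In both, one tests against product functions $\varphi(\zeta)\psi(z')$, applies Fubini to get the slice identity for a.e.\ $z'$ for each fixed test function, and uses a countable dense family of test functions to make the exceptional set independent of the test function. You also make explicit the patching via a countable cover by rational polydiscs and a partition of unity, which the paper compresses into ``it suffices to argue locally''. For the $W^{1,p}$ statement, you rerun the same argument for $\partial_{x_1}$ and $\partial_{y_1}$ rather than citing the standard slicing property of Sobolev functions.
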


\begin{proof}
It suffices to argue locally. Let $D\subset\mathbb C$ and
$U\subset\mathbb C^{n-1}$ be relatively compact open sets such that $ D\times U\Subset\Omega.$ 
For $\varphi\in C_c^\infty(D)$ and $\psi\in C_c^\infty(U)$, testing the
distributional identity $\bar\partial_{z_1}u=f$ against
$\varphi(\zeta)\psi(z')$ gives
\[
\int_U \psi(z')
\left[
\int_D f(\zeta,z')\varphi(\zeta)\,dv_\zeta
+
\int_D u(\zeta,z')\bar\partial_\zeta\varphi(\zeta)\,dv_\zeta
\right]dv_{z'}=0.
\]
By Fubini's theorem, the expression in brackets vanishes for almost every
$z'\in U$, for each fixed $\varphi$. A countable dense family of test functions in $C_c^\infty(D)$ allows the exceptional null set in $z'$ to be chosen independently of $\varphi$. Hence, \eqref{se} holds on $D$ in the sense of distributions for
almost every $z'\in U$. 

Moreover, Fubini's theorem gives $u(\,\cdot\,,z')\in L^p(D), \ 
f(\,\cdot\,,z')\in L^q(D)$ for almost every $z'\in U$.  
Finally, if $u\in W^{1,p}_{   loc}(\Omega)$, then the standard slicing property of Sobolev functions yields  $u(\,\cdot\,,z')\in W^{1,p}_{  loc}(\Omega_{z'})$. 
\end{proof}

\section{Proof of Theorem \ref{hh}}
 In this section, we prove  Theorem \ref{hh}, which provides  a quantitative   lower bound for the sup-norm of solutions to \eqref{eqn} near infinity.  
We note that, by  Lemma \ref{bs} part 2 and the boundedness of the potential $V$, the continuity assumption on $u$ in Theorem \ref{hh} may be weakened to $u\in L^1_{ {loc}}(\mathbb C^n)$. We retain the continuity assumption there and below only to ensure that the sup-norm of $u$ appearing in the statement is well defined. As in \cite{KSW} for the case  $n=1$, the main tools of the proof  are the three-circle theorem for holomorphic functions and the scaling argument of Bourgain-Kenig \cite{BK}.

We   begin  with     a  higher-dimensional  version of Hadamard's three-circle theorem for holomorphic functions.

\begin{lem}\label{the}
      Let $h$ be holomorphic in $B_2\subset \mathbb C^n$. Then for any $0<r<r_1<r_2<2$,
    $$ \|h\|_{L^\infty(B_{r_1})}\le \|h\|_{L^\infty(B_{r})}^\theta\|h\|_{L^\infty(B_{r_2})}^{1-\theta},$$
    where $ \theta =\frac{\ln r_2 -\ln r_1}{\ln r_2-\ln r} $.
\end{lem}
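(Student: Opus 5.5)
We reduce the statement to a one-variable problem by restricting $h$ to complex lines through the origin. For $M(\rho):=\|h\|_{L^\infty(B_\rho)}$, the goal is to show that $\ln M(\rho)$ is a convex function of $\ln \rho$ on $(0,2)$, which gives exactly the stated inequality.

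Fix a unit vector $\xi\in\mathbb C^n$, $|\xi|=1$, and set $g_\xi(\lambda):=h(\lambda\xi)$ for $\lambda\in D_2\subset\mathbb C$. This $g_\xi$ is holomorphic on the disc $D_2$. Define
\[
M_\xi(\rho):=\max_{|\lambda|\le\rho}|g_\xi(\lambda)|=\max_{|\lambda|=\rho}|g_\xi(\lambda)|,
\]
where the two maxima agree by the maximum principle. Hadamard's classical three-circle theorem, applied to $g_\xi$ on the annulus $\{r\le|\lambda|\le r_2\}$, gives
\[
M_\xi(r_1)\le M_\xi(r)^\theta M_\xi(r_2)^{1-\theta},\qquad \theta=\frac{\ln r_2-\ln r_1}{\ln r_2-\ln r}.
\]
If one wants to re-derive it, compare the subharmonic function $\ln|g_\xi|$ with the harmonic function $a+b\ln|\lambda|$ that matches $\ln M_\xi$ on the two boundary circles, and apply the maximum principle on the annulus.

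Next we take the supremum over $\xi$. Every point $z\in \overline{B_\rho}$ can be written as $z=\lambda\xi$ with $|\lambda|\le\rho$ and $|\xi|=1$. Hence $M(\rho)=\sup_{|\xi|=1}M_\xi(\rho)$. Since $M_\xi(r)\le M(r)$ and $M_\xi(r_2)\le M(r_2)$, and the exponents $\theta$ and $1-\theta$ are nonnegative, the one-variable inequality gives
\[
M_\xi(r_1)\le M(r)^\theta M(r_2)^{1-\theta}
\]
for every $\xi$. Taking the supremum over $\xi$ on the left yields the claim. Here the sup norms over the open balls agree with the maxima over the closed balls of radius $\rho<2$ by continuity of $h$, so using closed balls throughout is harmless.

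The only delicate point is a degenerate case: if $M_\xi(r)=0$ for some $\xi$, then $g_\xi\equiv0$ by the one-variable identity theorem, so $M_\xi(r_1)=0$ and the inequality holds trivially. Beyond this, the argument is elementary. The key observation is that a ball centered at the origin in $\mathbb C^n$ is a union of discs centered at the origin in complex lines, and this is what lets the one-variable theorem pass to $n$ variables without loss in $\theta$.
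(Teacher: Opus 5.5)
Your proposal is correct and follows essentially the same route as the paper: restrict $h$ to complex lines $\lambda\mapsto h(\lambda\xi)$ through the origin, apply Hadamard's three-circle theorem on each line, and take the supremum over unit directions using $\|h\|_{L^\infty(B_\rho)}=\sup_{|\xi|=1}\|h(\cdot\,\xi)\|_{L^\infty(D_\rho)}$. Your extra care with the order of the suprema and with the degenerate case $M_\xi(r)=0$ is fine but not needed beyond what the paper does.
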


\begin{proof}
 For each $\zeta\in S^{2n-1}$, define $h^\zeta(w) : = h(w\zeta), w\in D_2$. Then $h^\zeta$ is holomorphic in $D_2$. 
  By Hadamard’s three-circle theorem 
    $$ \|h^\zeta\|_{L^\infty(D_{r_1})}\le \|h^\zeta\|_{L^\infty(D_{r})}^\theta\|h^\zeta\|_{L^\infty(D_{r_2})}^{1-\theta}.$$
 Taking the supremum over \(\zeta\in S^{2n-1}\) gives
    $$   \sup_{\zeta\in S^{2n-1}}\|h^\zeta\|_{L^\infty(D_{r_1})}\le    \sup_{\zeta\in S^{2n-1}}\|h^\zeta\|_{L^\infty(D_{r})}^\theta   \sup_{\zeta\in S^{2n-1}}\|h^\zeta\|_{L^\infty(D_{r_2})}^{1-\theta}. $$
  The desired estimate then follows from the fact that, for every \(0<\rho<2\),
    $$ \|h\|_{ L^\infty(B_{\rho}) } =   \sup_{\zeta\in S^{2n-1}}\|h^\zeta\|_{L^\infty(D_{\rho})}. $$
\end{proof}

  We  next apply  the three-circle theorem  above to derive a lower bound for the sup-norm of solutions to \eqref{eqn} on   bounded domains. To this end,  we first relate a solution of \eqref{eqn} to a holomorphic function. When $n=1$, this is essentially immediate, since the 
$\bar\partial$-equation has no  compatibility condition.  In higher dimensions, the key observation is  the $\bar\partial$-closedness of the potential $V$, relying on the following result of Gong-Rosay \cite[Proposition A]{GR}. Combined with  sup-norm estimates for the $\bar\partial$-equation on balls, this  allows us to  obtain  an upper bound for the maximal  vanishing order   of solutions to \eqref{eqn} on balls.

\begin{theorem}\cite{GR}\label{gr1}
  Let $\Omega$ be a domain in $\mathbb C^n$. Suppose  $u \in C(\Omega)$    satisfies  $  \bar\partial u  = Vu$ on $\Omega$  in the sense of distributions for some $(0,1)$-form  $V\in L^\infty(\Omega)$. Then the zero set $u^{-1}(0) $ of $u$ is a complex analytic variety.  
  \end{theorem}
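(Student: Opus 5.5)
The plan is to reduce the $\bar\partial u = Vu$ equation to a holomorphic one locally, using a multiplicative factor $e^{-\lambda}$ with $\bar\partial\lambda = V$. We then read off analyticity of the zero set from the holomorphic factor.

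\textbf{Step 1: the potential is $\bar\partial$-closed on $\{u\neq 0\}$.} On the open set $\Omega_0=\{u\ne0\}$, Lemma \ref{bs} part 2 gives $u\in W^{1,q}_{loc}$ for every $q<\infty$. Hence $V=\bar\partial u/u$ componentwise, that is, $V=\bar\partial \log u$ locally wherever a branch of the logarithm exists. Consequently $\bar\partial V=0$ in the sense of distributions on $\Omega_0$.

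\textbf{Step 2: closedness extends across the zero set.} The main obstacle is to show that $\bar\partial V=0$ on all of $\Omega$ as distributions, or at least to localize appropriately near $Z=u^{-1}(0)$.

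On the interior of $Z$, the value of $V$ is irrelevant, since the equation $\bar\partial u=Vu$ holds there for any $V$. We may therefore redefine $V$ as we like on $\operatorname{int}Z$. The real difficulty lies at the boundary points of $Z$.

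Gong--Rosay's approach, which I would follow, is local. Fix $p\in\partial Z$ and a small ball $B$ around it. Solve $\bar\partial\lambda=V$ only approximately, or work one complex line at a time. In one variable, on each complex line $L$ through $B$, the restriction $u|_L$ satisfies $\bar\partial u|_L = V_L u|_L$ with $V_L\in L^\infty$ (using the slicing Lemma \ref{slice}). By the one-variable similarity principle, $u|_L=e^{s}h$ with $s$ continuous (indeed H\"older continuous) and $h$ holomorphic on $L\cap B$. Thus on each line the zero set is discrete or the whole line, with finite multiplicities.

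\textbf{Step 3: build a global holomorphic factor on $B$ and conclude.} To pass from lines to $B$, the natural route is to show that $\log|u|$ is plurisubharmonic up to a bounded correction. On each line, $\log|u| = \operatorname{Re}s + \log|h|$, where $\operatorname{Re}s$ has Laplacian controlled by $\|V\|_\infty$. So one expects $\log|u|+C|z|^2$ to be plurisubharmonic, or rather its upper semicontinuous regularization to be, with value $-\infty$ exactly on $Z$.

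I would first verify this by showing that $\Delta_L \log|u| \ge -C$ in the distributional sense on every line. Using the Poisson--Jensen representation on discs with the bound $|\partial_{\bar\zeta}s|\le\|V\|_\infty$, this yields the sub-mean-value inequality for $\log|u|+C|z|^2$ on every complex disc. Hence $\varphi=\log|u|+C|z|^2$ is plurisubharmonic and $Z=\{\varphi=-\infty\}$.

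Next, Step 1 shows that $\partial\bar\partial \log|u|$ equals $\operatorname{Re}$ of $\partial V$-type terms on $\Omega_0$, so its current is smooth-bounded off $Z$. The Lelong numbers of $\varphi$ are then the local vanishing multiplicities, which are integers by Step 2. Writing $dd^c\varphi = [\text{bounded form}] + \mu$ with $\mu$ supported on $Z$, Siu's theorem applied to the closed positive current $\mu$ shows that $Z=\{\nu(\mu,\cdot)\ge 1\}$ is analytic.

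Alternatively, and more directly, one can use the bounded part of Step 1 to solve $\bar\partial\lambda=V$ on $B\setminus Z$ with $\lambda$ bounded, extend it by the Riemann-type removability for bounded $\bar\partial$-solutions across the pluripolar set $Z$, and get that $ue^{-\lambda}$ is holomorphic on $B$ with zero set $Z\cap B$.

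I expect the extension of $\bar\partial V=0$ across $Z$ (equivalently, the removability of the pluripolar set $Z$ for the bounded $\lambda$) to be the hardest point. I would attack it first by proving that $Z$ is pluripolar via the plurisubharmonicity above. Closed pluripolar sets are removable for bounded solutions of linear $\bar\partial$-equations with $L^\infty$ data, since they have zero $W^{1,2}$-capacity.
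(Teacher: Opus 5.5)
The paper gives no proof of this statement; it is quoted from Gong--Rosay \cite{GR}. Your argument therefore has to stand on its own, and it has a genuine gap at the step that carries all the weight: the claim that $\varphi=\log|u|+C|z|^2$ is plurisubharmonic is false.

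From $\partial_{\bar\zeta}s=V_L\in L^\infty$ you control only first derivatives of $s$. The Laplacian $\Delta\operatorname{Re}s=4\operatorname{Re}\partial_\zeta V_L$ is a derivative of a merely bounded function. It is not bounded below, and in general not even a measure. It is also intrinsic: changing $s$ by a holomorphic function does not alter it. The Cauchy--Pompeiu formula only tells you that the circle mean of $\operatorname{Re}s$ over $|\zeta-\zeta_0|=\rho$ differs from $\operatorname{Re}s(\zeta_0)$ by at most $2\|V\|_{L^\infty}\rho$. A defect of order $\rho$ cannot be absorbed by any term $C\rho^2$.

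Here is a concrete counterexample. The function $u(z)=z_1e^{-|\operatorname{Re}z_1|}$ satisfies $\bar\partial u=Vu$ on $\mathbb C^n$ with $V=-\frac12\operatorname{sgn}(\operatorname{Re}z_1)\,d\bar z_1$. Take the point $z_1=i\epsilon$, $z'=z_0'$, and the circle of radius $\rho<\epsilon$ in the $z_1$-direction. The mean of $\log|u|+C|z|^2$ over this circle is smaller than its central value by $\frac{2\rho}{\pi}-C\rho^2$, which is positive for small $\rho$. So for every $C$, $\varphi$ fails to be subharmonic in every neighbourhood of every point of $Z=\{z_1=0\}$.

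Consequently pluripolarity of $Z$ is not established, and everything downstream loses its input. Your overall architecture is sound: extend $\bar\partial V=0$ across $Z$, then write $u=e^{\lambda}h$ on a ball with $\lambda$ continuous. Closed polar sets are indeed removable for $\bar\partial$-closedness of $L^\infty$ forms. But you have no valid proof that $Z$ is small.

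The other variants inherit the problem:
\begin{itemize}
\item The Siu variant repeats the error. Off $Z$, $dd^c\varphi$ is built from $\partial V$ and is not a bounded form, and the integrality of Lelong numbers is unjustified.
\item The ``direct'' variant solves $\bar\partial\lambda=V$ with bounded $\lambda$ on $B\setminus Z$. That presupposes solvability, or pseudoconvexity, of $B\setminus Z$, which is essentially what you are trying to prove.
\end{itemize}

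There is also a smaller issue. Lemma~\ref{slice} gives the one-variable equation only on almost every line parallel to a fixed axis, not on the particular lines through a given point of $\partial Z$. For those you must use the continuity of $u$: mollify, and take weak-$*$ limits of $\bar\partial(u_\varepsilon\circ f)$ for a holomorphic disc $f$.

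One route that needs only this first-order information is the following. With the similarity principle available on every holomorphic disc, constancy of winding numbers shows that $Z$ is locally a finitely sheeted multigraph over a ball in $\mathbb C^{n-1}$. It also shows that $\Omega\setminus Z$ satisfies the Kontinuit\"atssatz. Indeed, for a continuous family of discs avoiding $Z$ for $t<1$, with boundaries off $Z$ at $t=1$, the boundary winding number of $u$ stays $0$. Since multiplicities are positive, this excludes zeros at $t=1$. Hence $\Omega\setminus Z$ is pseudoconvex, and a Hartogs--Oka type theorem for multigraphs with pseudoconvex complement then gives analyticity.
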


\begin{lem}\label{l1}
Let $u \in C( B_2)  $  and $V$ be a   $(0,1)$-form   on $B_2$  such that $$\|V\|_{L^\infty(B_2)}\le M $$  for a  constant $M>0$. Suppose that  \begin{equation}\label{11}
    \|u\|_{L^\infty(B_1)}\ge 1 \ \ \text{and}  \ \ \|u\|_{L^\infty(B_2)}\le e^{C_0M}. 
\end{equation}    
   for a constant  $C_0>0$ and $u$ satisfies $$    \bar\partial u = Vu\ \ \text{on}\ \ B_2$$ in the sense of distributions. Then there exist  two positive constants $C_1$ and $ C_2 $ dependent only on $C_0$ and $n$,    such that for all $0<r<1$, 
\begin{equation*}
       \|u\|_{L^\infty(B_r)}\ge C_1^Mr^{C_2M}.
   \end{equation*}
    \end{lem}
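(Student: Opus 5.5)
We will write $u=e^{-\lambda}h$ with $h$ holomorphic, using the $\bar\partial$-closedness of $V$; then Lemma \ref{the} applied to $h$ transfers the lower bound from $B_1$ down to $B_r$.

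\textbf{Step 1: $V$ is $\bar\partial$-closed where needed.} If $u\equiv 0$ on an open set, then $u\equiv0$, which contradicts $\|u\|_{L^\infty(B_1)}\ge1$. By Theorem \ref{gr1}, the zero set $Z=u^{-1}(0)$ is therefore a proper complex analytic subvariety of $B_2$. On $B_2\setminus Z$, Lemma \ref{bs} gives $u\in W^{1,q}_{loc}$ for all $q$. Locally we may take a continuous branch of $\log u$, and then $V=\bar\partial\log u$ is $\bar\partial$-closed in the sense of distributions there. Since $V\in L^\infty$ and $Z$ has Hausdorff codimension at least $2$ (real), a standard cutoff argument shows that $\bar\partial V=0$ on all of $B_2$.

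\textbf{Step 2: solve $\bar\partial\lambda=V$ on a ball.} Take $\lambda$ solving $\bar\partial\lambda=V$ on $B_{3/2}$ (or on $B_{2-\delta}$) with a sup-norm estimate $\|\lambda\|_{L^\infty}\le C_nM$. This follows from Henkin-type integral formulas on the ball, which give $L^\infty\to L^\infty$ bounds, and the same formulas give $\lambda\in W^{1,q}_{loc}$ for $q>2n$. By Lemma \ref{pc}, $h:=ue^{-\lambda}$ satisfies $\bar\partial h=e^{-\lambda}(Vu-u\bar\partial\lambda)=0$, so $h$ is holomorphic. Moreover
\[
e^{-C_nM}|u|\le|h|\le e^{C_nM}|u|.
\]
Consequently $\|h\|_{L^\infty(B_1)}\ge e^{-C_nM}$ and $\|h\|_{L^\infty(B_{3/2})}\le e^{(C_0+C_n)M}$.

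\textbf{Step 3: three-circle argument.} Apply Lemma \ref{the} (rescaled to $B_{3/2}$) with radii $r<r_1=1<r_2=3/2$, and set $\theta=\frac{\ln(3/2)}{\ln(3/2)-\ln r}$. This gives
\[
e^{-C_nM}\le \|h\|_{L^\infty(B_r)}^\theta\, e^{(C_0+C_n)M(1-\theta)},
\]
hence
\[
\|h\|_{L^\infty(B_r)}\ge e^{-C'M/\theta}=e^{-C'M(1-\ln r/\ln(3/2))}.
\]
The right-hand side is of the form $c^M r^{C''M}$. Returning to $u$ costs another factor $e^{-C_nM}$, which is absorbed into $C_1^M$.

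\textbf{Main obstacle.} The main difficulty is Steps 1 and 2 in higher dimensions: justifying $\bar\partial V=0$ across $Z$, and obtaining a bounded solution $\lambda$ with enough regularity to apply Lemma \ref{pc}. If the removability across $Z$ is delicate, I would argue on slices and use that $Z$ is pluripolar. The Lipschitz regularity of $\lambda$ would be handled with the standard $C^\alpha$ and $W^{1,q}$ estimates for the Henkin solution with bounded data.
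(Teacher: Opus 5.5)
Your proposal is correct and follows the paper's proof essentially step by step. The common route is: Gong--Rosay makes the zero set $Z$ a proper analytic subvariety; $V=\bar\partial\log u$ is $\bar\partial$-closed off $Z$, hence on all of $B_2$ by removability; a sup-norm solution of $\bar\partial\lambda=V$ makes $h=ue^{-\lambda}$ holomorphic with $e^{-C_nM}|u|\le|h|\le e^{C_nM}|u|$; and the three-circle lemma with $r_1=1$, $r_2=3/2$ yields $C_1^Mr^{C_2M}$. The differences are cosmetic: you justify removability across $Z$ by a direct cutoff argument (valid since $V\in L^\infty$ and $\mathcal H^{2n-1}(Z)=0$) where the paper cites Demailly's lemma, and you solve for $\lambda$ on a slightly smaller ball, which is harmless.
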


\begin{proof} 
Let $S: = \{z\in B_2: u(z) = 0\}\subsetneq B_2$. Since $u$ is continuous, $B_2\setminus S$ is open. For every
$z_0\in B_2\setminus S$, we may choose  a sufficiently small neighborhood $U$ of $z_0$ such that  $u$ does not vanish on $U$ and a branch of $\log u$ is defined there. By Lemma \ref{bs}, we have $u\in W_{loc}^{1,p}(U)$ for all $p<\infty$.  The Sobolev chain rule \cite[ p. 48]{Zie} then gives $\log u\in W_{loc}^{1,p}(U)$ and 
\[
 \bar\partial(\log u)=\frac{\bar\partial u}{u} =V
\ \ \text{on }\ \  U.
\]
 Hence $\bar\partial V=0$ on $U$ in the sense of distributions.
Therefore, $ V$ is $\bar\partial$-closed on   $B_2\setminus S.$ 
Since  $V\in L^\infty(B_2)$, Theorem \ref{gr1} of Gong and Rosay implies that $S$ is a complex analytic variety in $B_2$.   It then follows from  Demailly's removable singularity result \cite[Lemma 6.9]{De}  that $$\bar\partial V =0\ \  \text{on}\ \  B_2$$ 
in the sense of distributions. 
Consequently,   the well-known sup-norm estimates for the $\bar\partial$-equation on balls provide a  function $\lambda\in L^\infty(B_2)$ such that $$\bar\partial \lambda = V \ \ \text{on}\ \ B_2$$ with
\begin{equation}\label{bd}
    \|\lambda\|_{L^\infty(B_2)}\le  {A(n)}\|V\|_{L^\infty(B_2)} =  {A(n)} M
\end{equation}
where $ {A(n)}>0$ depends only on $n$; see, for instance, \cite[pp. 94-95
]{LM}.  Moreover,  the  ellipticity of $\bar\partial$ implies that $\lambda\in W^{1, p}_{loc}(B_2)$ for all $p<\infty$. 
 
Since $u\in W^{1, p}_{loc}(B_2)$ for all $p<\infty$ by Lemma \ref{bs}, we can apply Lemma \ref{pc} to obtain  
$$ \bar\partial(ue^{-\lambda}) = \bar\partial u e^{-\lambda} - ue^{-\lambda} \bar\partial \lambda = Vu e^{-\lambda} -Vu e^{-\lambda} =0\ \ \text{on}\ \ B_2.  $$
  Thus $$ h: = ue^{-\lambda} $$ is holomorphic on $B_2$. By \eqref{11} and  \eqref{bd},  we have
   \begin{equation}\label{331}
   \begin{split}
        &\|h\|_{L^\infty(B_{1})}\ge e^{- {A(n)}M}\|u\|_{L^\infty(B_{1})}\ge e^{- {A(n)}M}\ge e^{-\tilde CM}; \\
        &\|h\|_{L^\infty(B_{2})}\le e^{ {A(n)}M}\|u\|_{L^\infty(B_{2})}\le e^{ ( {A(n)}+C_0)M}\le e^{\tilde CM}, 
   \end{split}
        \end{equation}where $\tilde C: = {A(n)}+C_0$.

 Applying Lemma \ref{the} with   \(r_1=1\) and $r_2 = \frac{3}{2}$, we obtain  from \eqref{331} that for all $0<r<1$,  
   \begin{equation*}
       e^{-\tilde CM}\le  \|h\|_{L^\infty(B_{1})} \le  \|h\|_{L^\infty(B_{r})}^\theta\|h\|_{L^\infty(B_{3/2})}^{ 1-\theta } \le  \|h\|_{L^\infty(B_{r})}^\theta e^{(1-\theta)\tilde CM},
   \end{equation*}
   where    $\theta =\frac{\ln 3-\ln 2}{\ln 3-\ln 2 -\ln r}$. Hence
   \begin{equation*}
       \|h\|_{L^\infty(B_{r})}\ge e^{-\frac{2\tilde CM}{\theta}}e^{\tilde CM}.
   \end{equation*}
Using  \eqref{bd}, $u =he^\lambda$ and   $\tilde C> {A(n)}$, we obtain
   \begin{equation*}
       \|u\|_{L^\infty(B_r)}\ge e^{-A(n)M} \|h\|_{L^\infty(B_r)} \ge e^{-\frac{2\tilde CM}{\theta}}=    C_1^M r^{C_2M},
   \end{equation*}
   where $C_1 =  e^{-2\tilde C}  $ and $C_2 = \frac{2\tilde C}{\ln 3-\ln 2}$. 
Both constants depend only on $C_0$ and $n$.
\end{proof}
\medskip

It is worth noting that  the continuity of $u$ is essential in order to conclude the $\bar\partial$-closedness of the $(0,1)$-form  $\frac{\bar\partial u}{u}$ in the proof of Lemma \ref{l1}. Indeed, the following example shows that, without continuity, this form may be \(\bar\partial\)-closed off a closed set but fail to remain \(\bar\partial\)-closed across it.
  
\begin{example}
  Let \[
F=\{z\in \mathbb C^2: \operatorname{Re} z_1=0\}. 
\]
Then $F$ is a closed real hypersurface in $\mathbb C^2$ of  real codimension \(1\).
Define $$u_0(z)=
\begin{cases}
1, & \operatorname{Re} z_1<0,\\
e^{\bar z_2}, & \operatorname{Re} z_1>0.
\end{cases}$$
Then $u_0$ is discontinuous across $F$ and smooth on $\mathbb C^2\setminus F$. On $\mathbb C^2\setminus F$, we have
\[\frac{\bar\partial u_0}{u_0} = 
H(\operatorname{Re} z_1)\,d\bar z_2, 
\]
 where \(H\) denotes the Heaviside function. Thus $\frac{\bar\partial u_0}{u_0}$ is a bounded $(0,1)$-form and is \(\bar\partial\)-closed off \(F\). However, in the sense of distributions,
\[
\bar\partial \left(\frac{\bar\partial u_0}{u_0}\right)
=
\frac12\,\delta_{\{\operatorname{Re} z_1=0\}}\,
d\bar z_1\wedge d\bar z_2
\neq 0.
\]
Therefore \(\frac{\bar\partial u_0}{u_0}  \) is not \(\bar\partial\)-closed across \(F\).
\end{example}

Finally, the scaling method of Bourgain-Kenig \cite{BK} converts the preceding  lower bound for solutions of \eqref{eqn} on bounded domains into a corresponding lower bound near infinity as stated in Theorem \ref{hh}.

\begin{proof}[Proof of Theorem \ref{hh}: ]
We first prove the theorem under the normalization $\|V\|_{L^\infty(\mathbb C^n)}\le 1$.     Fix $z_0\in \mathbb C^n$ with $|z_0| =R\gg1$, and define  $u_R(z): = u(Rz+z_0)$ and $V_R= R\cdot V(Rz+z_0)$ on $B_2$. Then   for all sufficiently large $R$,
    \begin{equation*}\begin{split}
        &\|u_R\|_{ L^\infty(  B_2)}\le e^{C_0(2R+|z_0|)}\le e^{3C_0R};\\
        &\|V_R\|_{L^\infty(  B_2)}\le R, 
    \end{split}
          \end{equation*}
    and
    \begin{equation*}
        \bar\partial u_R(z) = R\cdot V(Rz+z_0)u_R(z) =V_R(z)u_R(z),\ \ z\in B_2.
    \end{equation*}
    Since $u_R(-\frac{z_0}{R}) = u(0) =1$, 
    $$ \|u_R\|_{ L^\infty(  B_1)  }\ge  u_R\left(-\frac{z_0}{R}\right) =1.$$
    Applying Lemma \ref{l1}, with $M=R$ and $C_0$ there replaced by
$3C_0$, we obtain constants $C_1,C_2>0$, depending only on $C_0$ and $n$,  such that for $0<r<1$, 
    \begin{equation*}
        \|u_R\|_{L^\infty(  B_r)  }\ge C_1^Rr^{C_2R}. 
    \end{equation*}
   Let $R_0>1$ be such that $C_1\ge R_0^{- {C_2} } $.   Furthermore, for every $z_0\in\mathbb C^n$ with $|z_0|=R\ge R_0$,
taking $r=R^{-1}$ gives  
    \begin{equation*}
       \sup_{|z-z_0|<1} |u | = \|u_R\|_{L^\infty(  B_{R^{-1}})  }  \ge C_1^R R^{-C_2R}\ge R^{- 2C_2 R}  =   e^{-2C_2R\ln R}.
    \end{equation*}
     Thus the desired lower bound follows  in the case
\(\|V\|_{L^\infty(\mathbb C^n)}\le 1\), with $C=2C_2$ depending only on $C_0$ and $n$.
    
It  remains to remove the normalization on $V$.  Let $M: = \|V\|_{L^\infty(\mathbb C^n)} >1$ and define $v(z): = u\left(\frac{z}{ M}\right), z\in \mathbb C^n$. Then $ v(0)=u(0)= 1$, $$ |v(z)| = \left|u\left(\frac{z}{ M}\right)\right|\le e^{\frac{C_0}{ M}|z|}\le e^{C_0|z|}, \qquad |z|\gg1,$$
    and $$\bar\partial v(z) = \frac{1}{M}V\left(\frac{z}{ M}\right)u\left(\frac{z}{ M}\right) = \tilde V(z) v(z)\qquad \text{on}\ \ \mathbb C^n,$$
with $\tilde V: = \frac{1}{M}V\left(\frac{\cdot}{ M}\right)$. In particular,  $\|\tilde V\|_{L^\infty(\mathbb C^n)}=1$. Applying the normalized case to $v$, we obtain
$$ \inf_{|z_0|=R}\sup_{|z -z_0|<1} |v(z)|\ge e^{-CR\ln R},\ \ R\gg1,$$
where $C$ depends only on $C_0$ and $n$.
Consequently, since $M>1$, 
\begin{equation*}
\begin{split}
    \inf_{|z_0|=R}\sup_{|z-z_0|<1} |u(z)|   =&     \inf_{|z_0|=R}\sup_{|z -Mz_0|<M} |v(z)|
    \ge  \inf_{|z_0|=R}\sup_{|z -Mz_0|<1} |v(z)|\\ =   &  \inf_{|z_0|=MR}\sup_{|z -z_0|<1} |v(z)|  \ge e^{-CMR\ln( MR)}\ge e^{-\tilde C R\ln R},\ \ R\gg1,
\end{split}
    \end{equation*}  
 for some $\tilde C>0$ depending only on $C_0$, $n$ and $\|V\|_{L^\infty(\mathbb C^n)}$.   The proof is complete. 
    \end{proof}
\medskip

Several remarks concerning the statement of Theorem~\ref{hh} are in order.

\begin{remark}\label{lb}
The $R\ln R$ dependence in the exponent of the lower bound in Theorem~\ref{hh} is optimal  up to a multiplicative constant, even in the holomorphic case with $n=1$ and $V\equiv0$. This is demonstrated by the following family of entire functions. 

Fix $0<c<C_0$. For each sufficiently large $R>1$, choose
$z_R\in\mathbb C$ with $|z_R|=R$, and let
\begin{equation}\label{m}
    m=\lfloor cR\rfloor.
\end{equation}
Define
\[
u_R(z)
=
\left(\frac{z_R-z}{z_R}\right)^m
=
\left(1-\frac{z}{z_R}\right)^m,
\qquad z\in\mathbb C.
\]
The family $\{u_R\}$ is entire, 
$u_R(0)=1$ and  
\[
\bar\partial u_R=0=Vu_R\qquad \text{on}\ \  \mathbb C
\]
with $V\equiv0$. Moreover,  by \eqref{m}
\[
|u_R(z)|
\le
\left(1+\frac{|z|}{R}\right)^m
\le
e^{\frac{m}{R}|z|}
\le
e^{c|z|}
\le
e^{C_0|z|}\qquad \text{on}\ \  \mathbb C.
\]

On the other hand, 
\[
\sup_{|z-z_R|<1}|u_R(z)|
=
\sup_{|z-z_R|<1}
\left|\frac{z-z_R}{z_R}\right|^m
=
R^{-m}.
\]
Consequently,
\[
\inf_{|\zeta|=R}
\sup_{|z-\zeta|<1}|u_R(z)|
\le
\sup_{|z-z_R|<1}|u_R(z)|
=
R^{-m}
=
e^{-m\ln R}\le
e^{-\frac{c}{2}R\ln R}.
\]
Here, the last inequality follows from \eqref{m}, and hence  $m\ge \frac{c}{2}R $ for all sufficiently large $R$. 
Thus, the family $\{u_R\}_{R\gg1}$ satisfies the assumptions of Theorem~\ref{hh} uniformly in $R$ and exhibits decay of order $e^{-cR\ln R}$. 
Consequently, the $R\ln R$ exponent in Theorem~\ref{hh} cannot, in general,
be improved even for entire holomorphic functions.
\end{remark}

\begin{remark}\label{gd}
The conclusion of Theorem~\ref{hh} generally fails if the domain
$\mathbb C^n$ is replaced by the complement of a compact set. This is
already evident in one complex dimension. Let $\Omega=\mathbb C\setminus\overline{D_1}.$ 
For each $m\in\mathbb N$, define
\[
u_m(z):=\left(\frac{2}{z}\right)^m,\qquad z\in\Omega.
\]
Then $u_m$ is holomorphic on $\Omega$, satisfies $u_m(2)=1$, and $$\bar\partial u_m=0=Vu_m\qquad \text{on}\ \ \Omega $$
with $V\equiv0$. Moreover,
\[
|u_m(z)|\le 1\le e^{C_0|z|},\qquad |z|\ge2,
\]
for any $C_0>0$.

On the other hand, for every  $R>2$, and $z_0\in \mathbb C$ with $|z_0|=R$,
\[
\sup_{|z-z_0|<1}|u_m(z)|
=
\left(\frac{2}{R-1}\right)^m.
\]
Thus, given any $C>0$, by choosing $m=\lceil 2CR\rceil$ and then
taking $R$ sufficiently large, we obtain
\[
\sup_{|z-z_0|<1}|u_m(z)|\le
\left(\frac{2}{R-1}\right)^{2CR}
=
e^{-2CR\ln\frac{R-1}{2}} < e^{-CR\ln R}.
\]
Hence no lower bound of the form appearing in Theorem~\ref{hh}, with
a constant depending only on $C_0$ and $\|V\|_{L^\infty}$, can hold
on the complement of a compact set.
\end{remark}

\begin{remark}
The growth assumption $|u(z)|\le e^{C_0|z|}$ for sufficiently large $|z|$  cannot in general be omitted. Indeed, consider 
\[
u(z)=e^{e^{z_1}-1},\qquad z=(z_1,\ldots,z_n)\in\mathbb C^n.
\]
Then $u$ is entire, $u(0)=1$, and $\bar\partial u=0$.  The function $u$ does not satisfy the above exponential growth condition,
since
$u(R,0,\ldots,0)=e^{e^R-1}.$

Moreover, for $R>\pi$, set
\[
x_R=\sqrt{R^2-\pi^2},
\qquad
z_R=(x_R+i\pi,0,\ldots,0),
\]
so that $|z_R|=R$. If $|z-z_R|<1$, write
$z_1=x_R+s+i(\pi+t)$ where $|s|<1$ and $|t|<1$. Then
\[
\operatorname{Re}(e^{z_1})
=
-e^{x_R+s}\cos t
\le
-e^{x_R-1}\cos 1.
\]
It follows that
\[
\sup_{|z-z_R|<1}|u(z)|
\le
e^{-e^{x_R-1}\cos 1-1}.
\]
Since $x_R\sim R$ as $R\to\infty$, the right-hand side decays faster
than $e^{-CR\ln R}$ for every $C>0$. Consequently,
 $\inf_{|z_0|=R}\sup_{|z-z_0|<1}|u(z)|$
does not admit a lower bound of the form $e^{-CR\ln R}$ with a fixed
constant $C$. 

A similar example with a nontrivial bounded potential is obtained by setting 
\[
u(z)=f(z)e^{g(z)}.
\] with $f=e^{e^{z_1}-1}$,  and  $g(z)=\sin(\operatorname{Re}z_1)$. 
Then $u\in C^\infty(\mathbb C^n)$, $u(0)=1$, and
\[
\bar\partial u
=
(\bar\partial g)u
=
Vu,
\]
where $V
=
\frac{1}{2}\cos(\operatorname{Re}z_1)\,d\bar z_1.$ 
In particular,
\[
\|V\|_{L^\infty(\mathbb C^n)}
\le \frac12.
\]
Since $-1\le g\le1$,
\[
e^{-1}|f(z)|
\le |u(z)|
\le e|f(z)|.
\]
Thus $u$ fails the exponential growth condition of
Theorem~\ref{hh}, just as $f$ does.  Moreover, 
\[ \sup_{|z-z_R|<1}|u(z)|
\le e \sup_{|z-z_R|<1}|f(z)| \le e^{-e^{x_R-1}\cos 1},
\]
which again decays faster than
$e^{-CR\ln R}$ for every $C>0$. 
\end{remark}

One may also consider   analogous bounds for the full gradient operator. As shown below, the full-gradient setting yields a substantially stronger lower bound than the $\bar\partial$ case.
\begin{lem}
     Suppose that $u=(u_1, \ldots, u_N) \in W_{loc}^{1,1}(\mathbb{R}^n)$ satisfies
\begin{equation}\label{gi}
    |\nabla u(x)|\le C|u(x)|\qquad \text{a.e. on}\ \ \mathbb R^n.
\end{equation}
Then $u$ admits a locally Lipschitz representative, still denoted by
$u$, such that either $u\equiv 0$, or else $u$ is nowhere vanishing and
\[
|u(0)|e^{-C|x|}
\le |u(x)|
\le |u(0)|e^{C|x|},
\qquad x\in\mathbb{R}^n.
\]
\end{lem}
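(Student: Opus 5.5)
The plan is a Grönwall-type argument along straight segments, applied to a regularized version of $|u|$.

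First, regularity. From \eqref{gi} we have $|\nabla u|\le C|u|$. A bootstrap as in Lemma \ref{bs}, now using the Sobolev embedding for the full gradient, gives $u\in W^{1,p}_{loc}$ for every $p<\infty$, hence $u$ is continuous. Then $\nabla u\in L^\infty_{loc}$, so $u\in W^{1,\infty}_{loc}$ and $u$ has a locally Lipschitz representative. By Rademacher's theorem this representative is differentiable almost everywhere, and \eqref{gi} holds at almost every point.

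Next, the key pointwise inequality. For $\epsilon>0$ set $\phi_\epsilon=\frac12\ln(|u|^2+\epsilon^2)$. This function is locally Lipschitz, and almost everywhere
$$|\nabla\phi_\epsilon|=\frac{|\mathrm{Re}\langle \nabla u,u\rangle|}{|u|^2+\epsilon^2}\le \frac{C|u|^2}{|u|^2+\epsilon^2}\le C.$$
Hence $\phi_\epsilon$ is globally $C$-Lipschitz on $\mathbb R^n$. To pass from "gradient bounded a.e." to the Lipschitz bound along every segment, I would either mollify $\phi_\epsilon$ or use the standard fact that a $W^{1,\infty}_{loc}$ function with $\|\nabla\phi\|_\infty\le C$ on a convex set is $C$-Lipschitz there. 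Consequently, for all $x,y$,
$$|\phi_\epsilon(x)-\phi_\epsilon(y)|\le C|x-y|.$$
Exponentiating gives
$$(|u(y)|^2+\epsilon^2)^{1/2}e^{-C|x-y|}\le (|u(x)|^2+\epsilon^2)^{1/2}\le (|u(y)|^2+\epsilon^2)^{1/2}e^{C|x-y|}.$$

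Then we conclude by letting $\epsilon\to0$. This yields
$$|u(y)|e^{-C|x-y|}\le |u(x)|\le |u(y)|e^{C|x-y|}$$
for all $x,y$. Taking $y=0$ gives the stated two-sided bound. Moreover, if $u(x_0)=0$ at any single point, taking $y=x_0$ in the upper bound forces $u\equiv0$. Otherwise $u$ is nowhere vanishing.

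The main obstacle I expect is the regularity step. We must ensure the representative is genuinely locally Lipschitz, so that the a.e. gradient bound integrates along every segment rather than only almost every one. The bootstrap starting from $W^{1,1}_{loc}$ with $|\nabla u|\le C|u|$ should handle this. If the dimension-dependent embedding chain is awkward, an alternative is to apply the $\epsilon$-regularized logarithm directly to $u\in W^{1,1}_{loc}$. That function has bounded gradient, so it lies in $W^{1,\infty}_{loc}$ and is Lipschitz. This gives the Lipschitz property of $\phi_\epsilon$ without first upgrading $u$, and continuity of $|u|$ then follows from the uniform Lipschitz bound on $\phi_\epsilon$ as $\epsilon\to0$.
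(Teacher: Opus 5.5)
Your proposal is correct and follows essentially the same route as the paper. Both bootstrap to $W^{1,\infty}_{loc}$, show that an $\varepsilon$-regularized logarithm of $|u|$ is globally $C$-Lipschitz, exponentiate, and let $\varepsilon\to0$, which also gives the vanishing alternative; the only difference is that you use $\frac12\ln(|u|^2+\epsilon^2)$ where the paper uses $\ln(|u|+\varepsilon)$, so you can apply the smooth chain rule instead of the bound $|\nabla|u||\le|\nabla u|$.
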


\begin{proof}
By the Sobolev embedding theorem and \eqref{gi}, a standard bootstrap argument yields $u\in W_{ {loc}}^{1,\infty}(\mathbb R^n)$. Thus $u$ admits a locally Lipschitz representative. 
Set
\[
w(x)=|u(x)|.
\]
Then $w\in W_{ {loc}}^{1,\infty}(\mathbb R^n)$ and
\[
|\nabla w|
\le |\nabla u|
\le Cw
\qquad \text{a.e. in }\ \mathbb R^n.
\]
For $\varepsilon>0$, define
\[
v_\varepsilon(x)=\log\bigl(w(x)+\varepsilon\bigr).
\]
Then
\[
|\nabla v_\varepsilon|
=
\frac{|\nabla w|}{w+\varepsilon}
\le
 \frac{Cw}{w+\varepsilon}
\le C
\qquad \text{a.e. in }\ \mathbb R^n.
\]
Hence $v_\varepsilon$ satisfies
\[
|v_\varepsilon(x)-v_\varepsilon(y)|
\le C|x-y|,
\qquad x,y\in\mathbb R^n.
\]
Equivalently,
\begin{equation}\label{eps}
    e^{-C|x-y|}
\le
\frac{|u(x)|+\varepsilon}{|u(y)|+\varepsilon}
\le
e^{C|x-y|}.
\end{equation}

Suppose that $u(y)=0$ for some $y\in\mathbb R^n$. Then
\[
|u(x)|+\varepsilon
\le e^{C|x-y|}\varepsilon.
\]
Letting $\varepsilon\to0$ gives $u(x)=0$ for every
$x\in\mathbb R^n$. Thus either $u\equiv0$, or $u$ is nowhere
vanishing. In the latter case, letting $\varepsilon\to0$ in \eqref{eps} yields
\[
e^{-C|x-y|}
\le
\frac{|u(x)|}{|u(y)|}
\le
e^{C|x-y|},
\qquad x,y\in\mathbb R^n.
\]
Taking $y=0$, we obtain the desired inequality, which  completes the proof. 
 \end{proof}

As an application, Theorem \ref{hh} immediately yields the following unique continuation property for \eqref{eqn} at infinity. The $|z|\ln |z|$  decay assumption here is not optimal for this purpose:  as Theorem~\ref{ol} will show,  a sufficiently rapid linear exponential decay already forces triviality.

\begin{cor}\label{lan}
   Suppose $u\in L_{loc}^1(\mathbb C^n)$  satisfies \begin{equation*} 
        \bar\partial u = Vu \ \ \text{on}\ \ \mathbb C^n
    \end{equation*}  
in the sense of distributions for some   $(0,1)$-form $V\in L^\infty(\mathbb C^n)$.  Then there exists a  constant $\tilde C>0$   such that whenever  $$|u(z)|\le   e^{- \tilde C|z| |\ln|z|| },\quad   \ \ |z|\gg1,$$ then $u\equiv 0$. In particular, if   \begin{equation*}
    |u(z)|\le  e^{-|z|^{1+\epsilon}}, \ \ |z|\gg1 
\end{equation*}  for some $\epsilon>0$, then $u\equiv 0$. 
    \end{cor}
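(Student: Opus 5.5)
The plan is to argue by contradiction from Theorem \ref{hh}. Suppose $u\not\equiv 0$. By Lemma \ref{bs} part 2, $u\in W^{1,q}_{loc}(\mathbb C^n)$ for all $q<\infty$, so $u$ is continuous. We may therefore work with its continuous representative and all pointwise statements make sense.

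First we normalize $u$ so that Theorem \ref{hh} applies, which needs $u(0)=1$. Since $u\not\equiv0$ is continuous, there is a point $a$ with $u(a)\neq 0$. We then set $\tilde u(z):=u(z+a)/u(a)$ and $\tilde V(z):=V(z+a)$. Then $\bar\partial\tilde u=\tilde V\tilde u$ on $\mathbb C^n$ with $\|\tilde V\|_{L^\infty}=\|V\|_{L^\infty}$, and $\tilde u(0)=1$.

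The growth hypothesis of Theorem \ref{hh} holds trivially. Indeed, the assumed decay gives $|u(z)|\le 1$ for $|z|\gg1$. Hence $|\tilde u(z)|\le |u(a)|^{-1}\le e^{|z|}$ for $|z|\gg 1$, so we may take $C_0=1$. Theorem \ref{hh} then yields a constant $C$, depending only on $n$ and $\|V\|_{L^\infty}$ (the value $C_0=1$ being fixed), such that
$$\sup_{|z-z_0|<1}|\tilde u(z)|\ge e^{-CR\ln R}\qquad\text{for every } |z_0|=R\gg1.$$

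The one point needing care is that $\tilde C$ in the corollary must not depend on $u$, while the normalization introduces $a$ and $|u(a)|$. The factor $|u(a)|^{-1}$ only affects the threshold for ``$R\gg1$'', not $C$, so it is harmless. The shift by $a$ changes $|z|$ by at most $|a|$. We therefore take $\tilde C:=2C$. For $|w-z_0|<1$ with $|z_0|=R$, the point $z=w+a$ satisfies $|z|\ge R-1-|a|$. The hypothesis then gives
$$|\tilde u(w)|\le |u(a)|^{-1}e^{-2C(R-1-|a|)\ln(R-1-|a|)}.$$
For $R$ large, this is strictly smaller than $e^{-CR\ln R}$, because $2C(R-1-|a|)\ln(R-1-|a|)-CR\ln R\to\infty$ dominates the constant $\ln|u(a)|^{-1}$. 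This contradicts the lower bound, so $u\equiv0$.

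For the last assertion, note that $|z|^{1+\epsilon}\ge \tilde C|z|\ln|z|$ for $|z|\gg1$. Hence $e^{-|z|^{1+\epsilon}}\le e^{-\tilde C|z||\ln|z||}$ eventually, and the first part applies.
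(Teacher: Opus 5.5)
Your proposal is correct and matches the paper's proof essentially step for step: continuity via Lemma~\ref{bs}, the normalization $u(z+a)/u(a)$ at a nonvanishing point, the trivial growth bound $e^{|z|}$, Theorem~\ref{hh}, and the comparison using $|z+a|\ge R-|a|-1$. Your explicit choice $\tilde C=2C$ is simply a concrete instance of the paper's requirement $\tilde C>C$. Your remark that $\tilde C$ depends only on $n$ and $\|V\|_{L^\infty}$, and not on $u$, is a correct and useful clarification.
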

    
\begin{proof}    By  Lemma \ref{bs},   $u\in C(\mathbb C^n)$.    
We argue by contradiction that $u\not\equiv 0$. Choose $a\in\mathbb C^n$ such that $u(a)\ne0$
and define
\[
v(z):=\frac{u(z+a)}{u(a)},
\qquad
\tilde V(z):=V(z+a).
\]
Then
\[
v(0)=1,
\qquad
\|\tilde V\|_{L^\infty(\mathbb C^n)}
=
\|V\|_{L^\infty(\mathbb C^n)},
\]
and $$
\bar\partial v=\tilde Vv \qquad \text{on}\ \ \mathbb C^n$$
in the sense of distributions. 
Moreover, since $v$ tends to zero at infinity, it satisfies, for
example,
\[
|v(z)|\le e^{|z|}
\qquad\text{for }|z|\gg1.
\]
 Theorem \ref{hh} then provides a constant \(C>0\) dependent on $ \|V\|_{L^\infty(\mathbb C^n)}$ and $n$ such that, for all sufficiently large \(R\), and every  $z_0\in \mathbb C^n$ with $|z_0|=R$. 
   $$\sup_{|z-z_0|<1}|v(z)| \ge e^{-CR\ln R}. $$

On the other hand, for $|z_0|=R$ and $|z-z_0|<1$, $ |z+a| \ge R-|a|-1. $
Hence the assumed decay of $u$ gives
\[
\sup_{|z-z_0|<1}|v(z)|
\le
\frac{1}{|u(a)|}
e^{-\tilde C
(R-|a|-1)
\ln(R-|a|-1)}
\]
for all sufficiently large $R$. Since $(R-|a|-1)\ln(R-|a|-1)
\sim R\ln R $ as $R\to\infty,$ 
this contradicts the preceding lower bound provided
$\tilde C>C$. Therefore $u\equiv0$. Finally, the second assertion follows from the first since for every $\epsilon>0$ and every $\tilde C>0$,
 $|z|^{1+\epsilon}
\ge
\tilde C|z|\ln|z|$  
for all sufficiently large $|z|$. 
\end{proof}

\section{Proofs of Theorems  \ref{ol}  and   \ref{cr}}
Instead of proving Theorem \ref{ol} directly,  we establish unique continuation for a large class of radially decaying potentials described below, from which Theorem \ref{ol} follows immediately. 

\begin{theorem}\label{gen}
Let $\phi:[R_0,\infty)\to(0,\infty)$ be a nonincreasing
function for some $R_0\ge 0$, and set
\[
\Phi(r):=\int_{R_0}^r\phi(s) ds.
\]
Assume that
\begin{equation}\label{dc}
    \frac{\Phi(r)}{\ln r}\rightarrow\infty
\qquad\text{as }r\to\infty.
\end{equation}
Suppose $ u\in L^1_{ loc}
(\mathbb C^n\setminus\overline{B_R})$
satisfies
\[
\bar\partial u=Vu
\qquad\text{on }
\mathbb C^n\setminus\overline{B_R}
\]
in the sense of distributions, where
$V\in L^\infty
(\mathbb C^n\setminus\overline{B_R})$ 
is a $(0,1)$-form satisfying
\[
|V(z)|\le  {C_0}{\phi(|z|)},
\qquad |z|\gg1,
\]
for some $C_0>0$. If
\[
|u(z)|\le e^{-C \Phi(|z|)},
\qquad |z|\gg1,
\]
for some $C>2C_0,$ 
then $u$ vanishes identically.
\end{theorem}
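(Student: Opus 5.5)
We will reduce to one complex variable and then prove the one-dimensional statement directly on an exterior domain, using the similarity principle and monotonicity of the winding number.

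\emph{Reduction to $n=1$.} By Lemma \ref{bs} part 2, $u\in W^{1,q}_{loc}(\mathbb C^n\setminus\overline{B_R})$ for every $q<\infty$, so $u$ is continuous. For $n\ge2$ write $z=(z_1,z')$. If $|z'|>R$, the whole line $\{(\zeta,z'):\zeta\in\mathbb C\}$ lies in the domain. By Lemma \ref{slice} (applied to $\bar\partial_{z_1}u=V_1u$), for a.e. such $z'$ the slice $u_{z'}:=u(\cdot,z')$ satisfies
\[
\bar\partial_\zeta u_{z'}=V_1(\cdot,z')u_{z'}\quad\text{on }\mathbb C .
\]
Since $\phi$ is nonincreasing and $|(\zeta,z')|\ge|\zeta|$, we have $|V_1(\zeta,z')|\le C_0\phi(|\zeta|)$ for $|\zeta|\gg1$. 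Since $\Phi$ is nondecreasing, $|u_{z'}(\zeta)|\le e^{-C\Phi(|\zeta|)}$ for $|\zeta|\gg1$. Also $V_1(\cdot,z')\in L^\infty$ for a.e. $z'$. Hence the one-dimensional case gives $u_{z'}\equiv0$ for a.e. $|z'|>R$. By continuity, $u$ vanishes on the open set $\{|z'|>R\}$. Since $\mathbb C^n\setminus\overline{B_R}$ is connected for $n\ge 2$, weak unique continuation (Theorem \ref{pz} part 1, after the reduction to $|\bar\partial u|\le|V||u|$) gives $u\equiv0$.

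\emph{The case $n=1$.} Suppose $u\not\equiv0$ on $\{|z|>R\}$. Locally, the similarity principle applies: solve $\bar\partial\lambda=V$ on a disc with $\lambda\in W^{1,q}$, and use Lemma \ref{pc} to see that $h=ue^{-\lambda}$ is holomorphic. Hence $u$ has isolated zeros of finite order, since by weak unique continuation $u$ vanishes on no open set. So $\log|u|$ is integrable on every circle, and all but countably many circles $|z|=r$ avoid the zeros. On such a circle let $N(r)$ be the winding number of $u$. Writing $u=e^\lambda h$ on an annulus, where $e^\lambda$ is single-valued and contributes winding $0$, the argument principle for $h$ shows that $N(r)$ is nondecreasing in $r$ (each zero counts positively).

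Locally write $u=e^{f}$ with $\bar\partial f=V$, and use $\bar\partial=\tfrac{e^{i\theta}}2(\partial_r+\tfrac ir\partial_\theta)$. Taking real parts gives
\[
\partial_r\log|u|-\tfrac1r\partial_\theta\arg u=\operatorname{Re}(2e^{-i\theta}V)\ge-2|V|.
\]
Averaging in $\theta$, the mean $M(r)=\frac1{2\pi}\int_0^{2\pi}\log|u(re^{i\theta})|\,d\theta$ satisfies
\[
M'(r)\ge \frac{N(r)}r-2C_0\phi(r)\ge\frac{N(r_1)}r-2C_0\phi(r)\qquad (r\ge r_1).
\]
Integrating from $r_1$ gives
\[
M(r)\ge M(r_1)+N(r_1)\ln\frac r{r_1}-2C_0\bigl(\Phi(r)-\Phi(r_1)\bigr).
\]
The decay hypothesis gives $M(r)\le -C\Phi(r)$. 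Combining the two,
\[
(C-2C_0)\Phi(r)\le |N(r_1)|\ln r+O(1).
\]
This contradicts \eqref{dc}, because $C>2C_0$ and $N(r_1)$ is a fixed integer.

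\emph{Main obstacle.} The delicate step is justifying the differential inequality for $M(r)$ across radii where the circle meets zeros of $u$. It only has a weak meaning there, and $u$ is merely $W^{1,q}$. I would first establish the integrated version directly between two zero-free radii $r_1<r$. To do so I would write $\log|u|=\operatorname{Re}\lambda+\log|h|$ on the annulus and apply Jensen/Green's formula to $\log|h|$; the zeros of $h$ contribute with a favorable sign, which is exactly the monotonicity of $N$. The $\operatorname{Re}\lambda$ part is handled through the pointwise identity above applied to the smooth-in-$\theta$ quantities, approximating $V$ and $u$ if necessary. Finally, I would extend to all large $r$ by continuity of $M$ away from a countable set.
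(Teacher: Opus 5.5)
Your proposal is correct and follows essentially the paper's route. For $n=1$ both arguments use the similarity principle, the monotone winding number, and the polar form of $\bar\partial$ averaged in $\theta$ to get $M'(r)\ge N(r_1)/r-2C_0\phi(r)$, which is then integrated against the decay hypothesis and \eqref{dc}; for $n\ge2$ both use coordinate slicing followed by weak unique continuation. The differences are minor: you slice along full complex lines with $|z'|>R$ rather than the paper's punctured lines with $|z'|<R/2$, and you justify the integrated inequality through the split $\log|u|=\operatorname{Re}\lambda+\log|h|$, whereas the paper simply notes that $\ln|u|\in W^{1,1}_{loc}$, so the circular mean is locally absolutely continuous.
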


 The condition \eqref{dc} is satisfied by many natural decay rates, as illustrated by the following examples, which yield corresponding unique continuation results.
\begin{example}\label{ex11}
   Suppose $ u\in L^1_{ loc}
(\mathbb C^n\setminus\overline{B_R})$ satisfies $\bar\partial u=Vu$ on $\mathbb C^n\setminus\overline{B_R}$ in the sense of distributions for some $(0,1)$-form $V\in L^\infty
(\mathbb C^n\setminus\overline{B_R})$. 
   \begin{enumerate}
  \item Let \[
\phi(r)\equiv1 \qquad\text{and}\qquad C_0 = \|V\|_{L^\infty
(\mathbb C^n\setminus\overline{B_R})}.
\]
Then
\[
\Phi(r)= \int_{R_0}^r\phi(s)\,ds = r+O(1),
\]
and the hypothesis on $V$ in Theorem \ref{gen} is automatically satisfied. Hence,     if $$|u(z)|\le e^{-C|z|},\qquad |z|\gg1  $$ for some $C>2\|V\|_{L^\infty
(\mathbb C^n\setminus\overline{B_R})}$, then $u\equiv 0$. Thus Theorem \ref{gen} recovers    Theorem \ref{ol}.

\item Let
\[
\phi(r)=\frac{1}{(\ln r)^\alpha},
\qquad \alpha>0.
\]
Then for   $R_0>1$,
\[
\Phi(r)=\int_{R_0}^r\phi(s)\,ds
=
\frac{r}{(\ln r)^\alpha}
\left(1+O\left(\frac{1}{\ln r}\right)\right).
\]
Consequently, Theorem \ref{gen} implies that if
\[
|V(z)|\le \frac{C_0}{(\ln|z|)^\alpha},
\qquad |z|\gg1,
\]
and
\[
|u(z)|
\le
e^{-\frac{C|z|}{(\ln|z|)^\alpha}
}, \qquad |z|\gg1,
\]
for some $C>2C_0$, then $u\equiv0$.  

\item Let
\[
\phi(r)=\frac{(\ln r)^\beta}{r},
\qquad \beta>0.
\]
 Then
\[
\Phi(r)=\int_{R_0}^r\phi(s)\,ds
=
\frac{(\ln r)^{\beta+1}}{\beta+1}+O(1).
\]
Choose $R_0$ sufficiently large so that $\phi$ is nonincreasing when $r\ge R_0$.  Hence, if
\[
|V(z)|
\le
C_0\frac{(\ln|z|)^\beta}{|z|},
\qquad |z|\gg1,
\]
and
\[
|u(z)|
\le
e^{-\frac{C}{\beta+1}
(\ln|z|)^{\beta+1}
},
\qquad |z|\gg1,
\]
for some $C>2C_0$, then $u\equiv0$.

\end{enumerate} 
 \end{example}

   To prove Theorem~\ref{gen}, we first establish the result in one complex dimension and then use slicing to reduce the general case to almost every complex line parallel to a coordinate axis. In one dimension, the similarity principle below shows that the zeros of a nontrivial solution are isolated and have positive multiplicities, which implies a monotonicity property for the winding number on concentric circles. Combined with the equation in polar coordinates, this yields a lower bound on the radial mean of $\ln|u|$ that is incompatible with the decay assumption in Theorem~\ref{gen}.

\begin{lem}\label{sp}
Let $\Omega\subset\mathbb C$ be a domain. Suppose
$ u\in L^1_{loc}(\Omega)$ satisfies
\[
\bar\partial  u=V u
\qquad\text{on }\ \ \Omega
\]
in the sense of distributions, where
$V\in L^\infty_{loc}(\Omega)$. Then the following hold.

\begin{enumerate}
    \item   For every relatively compact smooth domain $D\Subset\Omega$,
there exist $ \lambda\in W^{1,p}(D) $ for every $p<\infty, $ and a holomorphic function $h$ on $D$ such that
\[
u=e^\lambda h
\qquad\text{on }\ \ D.
\]
In particular, either $u\equiv0$, or the zeros of $u$ are isolated.
If $z_0$ is a zero of $u$, its multiplicity is defined by
\begin{equation}\label{ord}
    \operatorname{ord}_{z_0}u:=\operatorname{ord}_{z_0}h,
\end{equation}
and is a positive integer.

\item Let $0 < r_1< r_2$ be such that $\{z:r_1\le |z|\le r_2\}\Subset\Omega$ and $ u$ does not vanish on the circles $|z|=r_1$ and $|z|=r_2$. Denote by $k(r_j)$ the winding number of the curve
 $\theta\mapsto  u(r_je^{i\theta})$ 
about the origin. Then
\[
k(r_2)-k(r_1)
=
\sum_{r_1<|z|<r_2}\operatorname{ord}_z  u
\ge0.
\]

\end{enumerate}
\end{lem}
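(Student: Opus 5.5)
For part 1, fix a relatively compact smooth domain $D\Subset\Omega$ and choose a slightly larger smooth domain $D'$ with $D\Subset D'\Subset\Omega$. By Lemma \ref{bs} part 2, $u\in W^{1,q}_{loc}(\Omega)$ for every $q<\infty$; in particular $u$ is continuous. Set $\tilde V:=V\chi_{D'}\in L^\infty(\mathbb C)$, which has compact support. Let $\lambda$ be its Cauchy transform,
\[
\lambda(z):=\frac{1}{\pi}\int_{\mathbb C}\frac{\tilde V(\zeta)}{z-\zeta}\,dv_\zeta ,
\]
where $V$ is identified with its $d\bar z$-coefficient. Then $\bar\partial\lambda=\tilde V$ in the sense of distributions. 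The Calder\'on--Zygmund bound for the Beurling transform gives $\partial\lambda\in L^p$ for all $p<\infty$, and $\lambda$ is bounded on bounded sets, so $\lambda\in W^{1,p}(D')$ for every $p<\infty$. Since $p>2=d$, Lemma \ref{pc} applies to $u e^{-\lambda}$ and gives
\[
\bar\partial(ue^{-\lambda})=e^{-\lambda}(Vu-\tilde V u)=0\quad\text{on } D'.
\]
By Weyl's lemma, $h:=ue^{-\lambda}$ is holomorphic on $D'$, hence on $D$, and $u=e^{\lambda}h$ there.

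Since $e^\lambda$ never vanishes, the zeros of $u$ in $D$ coincide with those of $h$. So if $u\not\equiv0$ on $D$, the zeros of $u$ in $D$ are isolated.

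To get the dichotomy on all of $\Omega$, I would use connectedness. The set of points having a neighborhood on which $u\equiv0$ is open. It is also relatively closed in $\Omega$: take a small disc $D$ around a limit point; on $D$, $h$ vanishes on an open set, so $h\equiv0$ on $D$ and hence $u\equiv0$ on $D$. Thus this set is empty or all of $\Omega$.

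The order $\operatorname{ord}_{z_0}h$ must not depend on the choice of $D$ or $\lambda$. If $e^{\lambda_1}h_1=e^{\lambda_2}h_2$ near $z_0$, then
\[
h_1/h_2=e^{\lambda_2-\lambda_1}
\]
away from the zeros. This quotient is continuous and nonvanishing, and it is holomorphic off isolated points, so it is holomorphic and nonvanishing at $z_0$. Hence the two orders agree, and the order is a positive integer because $h(z_0)=0$.

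For part 2, cover the compact annulus $\{r_1\le|z|\le r_2\}$ by a single smooth domain $D\Subset\Omega$, for instance a slightly thickened annulus, and write $u=e^\lambda h$ there. Here $\lambda$ is continuous on $\overline D$ and single-valued. Along each circle $|z|=r_j$, the curve $\theta\mapsto e^{\lambda(r_je^{i\theta})}$ has the continuous logarithm $\lambda$, so its winding number is zero. Winding numbers add under products of nonvanishing curves, so $k(r_j)$ equals the winding number of $h$ on $|z|=r_j$.

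Since $u\not\equiv0$ (it is nonzero on the circles), $h$ has finitely many zeros in the open annulus $r_1<|z|<r_2$. The argument principle applied to the boundary of this annulus, with the outer circle counterclockwise and the inner circle clockwise, gives
\[
k(r_2)-k(r_1)=\sum_{r_1<|z|<r_2}\operatorname{ord}_z h .
\]
By \eqref{ord} this is the asserted sum of $\operatorname{ord}_z u$, and it is nonnegative.

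The only delicate points are the regularity needed for Lemma \ref{pc}, namely $\lambda\in W^{1,p}$ with $p>2$, and the independence of the multiplicity from the representation. Both are handled above.
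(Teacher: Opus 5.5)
Your proof is correct and follows the paper's argument. You factor $u=e^{\lambda}h$ with $\lambda$ the Cauchy transform of $V$ over (a neighborhood of) $D$, invoke Lemma~\ref{pc} to see that $h$ is holomorphic, and then combine the vanishing winding number of $e^{\lambda}$ with the argument principle. Your connectedness argument for the global dichotomy and your removable-singularity argument for the well-definedness of $\operatorname{ord}_{z_0}u$ are valid refinements of steps the paper treats briefly; the paper handles the latter instead by noting $\bar\partial(\lambda_1-\lambda_2)=0$.
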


\begin{proof}
By  Lemma \ref{bs} part 2, $  u\in W^{1,p}_{  loc}(\Omega) $ for every $p<\infty$, and in particular $ u$ is continuous. Fix a
relatively compact smooth domain $D\Subset\Omega$. Let 
$$  \lambda(z) = -\frac{1}{\pi}
    \int_{D}
    \frac{V(\zeta)}{\zeta-z}\,dv_\zeta.$$
    Then $\lambda\in W^{1,p}(D)$  for every $p<\infty$,  and  solves  
$\bar\partial\lambda=V$ on $D$. See, for instance, \cite{Ve} and \cite{PZ4}. In particular,  $\lambda\in C(D)$.
By Lemma~\ref{pc},
\[
h:=e^{-\lambda} u
\]
is holomorphic on $D$. Since $e^\lambda$ is continuous and nowhere
vanishing, $ u$ and $h$ have the same zeros. Thus, unless $ u\equiv0$, its zeros are isolated.

For a zero $z_0$ of $u$,  the definition \eqref{ord} is independent of the factorization. Indeed, if
$u=e^{\lambda_1}h_1=e^{\lambda_2}h_2$ locally, then
$\bar\partial(\lambda_1-\lambda_2)=0$, so that
$\lambda_1-\lambda_2$ is holomorphic, and
\[
h_2=e^{\lambda_1-\lambda_2}h_1.
\]
Since the holomorphic factor $e^{\lambda_1-\lambda_2}$ is nowhere
vanishing, $h_1$ and $h_2$ have the same order at $z_0$.
Consequently, every zero of $u$ has positive integer multiplicity.

For the second assertion, choose a relatively compact smooth domain
$U\Subset\Omega$ containing $\{z:r_1\le |z|\le r_2\}. $ 
By the preceding construction, $ u=e^\lambda
h$ on $U$, with $h$ holomorphic.
Since $e^\lambda$ has winding number zero on every closed curve,
$ u$ and $h$ have the same winding number on the two boundary
circles. Hence the argument principle gives
\[
k(r_2)-k(r_1)
=
\sum_{r_1<|z|<r_2}\operatorname{ord}_z h
=
\sum_{r_1<|z|<r_2}\operatorname{ord}_z  u
\ge0.
\]
This proves the result.
\end{proof}

\begin{proof}[Proof of Theorem \ref{gen}: ]
We first consider the case $n=1$. By  Lemma \ref{bs} part 2, $u\in W^{1,p}_{ loc}
(\mathbb C\setminus\overline{D_R})$ 
for every $p<\infty$, and in particular $u$ is continuous.

Suppose that $u\not\equiv0$. By Lemma~\ref{sp}, the zeros of
$u$ are isolated. Choose $r_0>R$ sufficiently large so that
the assumed estimate for $V$ holds for $|z|>r_0$ and $u$ has
no zeros on $|z|=r_0$.
For $r>r_0$ such that $u$ has no zeros on $|z|=r$, let
$k(r)$ denote the winding number of $\theta\mapsto u(re^{i\theta})$ 
about the origin, and define
\[
m(r):=
\frac{1}{2\pi}
\int_0^{2\pi}
\ln|u(re^{i\theta})|\,d\theta.
\]
By the local factorization in the proof of  Lemma~\ref{sp}, $\ln | u| = \ln |h| +\operatorname{Re} \lambda    $, where $h\not\equiv 0$ is holomorphic. Since  $\ln |h|\in W^{1,1}_{ loc}$, it follows that   $\ln| u|\in W^{1,1}_{ loc}$, and hence $m$ is locally absolutely continuous. Moreover, for almost every $r>r_0$, the winding number 
\[
k(r)=
\frac{1}{2\pi i}
\int_0^{2\pi}
\frac{\partial_\theta  u(re^{i\theta})}
     { u(re^{i\theta})}\,d\theta.
\]

Since  $\bar\partial
=
\frac{e^{i\theta}}{2}
\left(
\partial_r+\frac{i}{r}\partial_\theta
\right)  $ in polar coordinates, for almost every $r>r_0$, the equation \eqref{eqn} is rewritten as 
\[
\frac{\partial_r u}{u}
+
\frac{i}{r}
\frac{\partial_\theta u}{u}
=
2e^{-i\theta}V.
\]
 Averaging over $\theta$ and taking real parts, we obtain
\[
m'(r)-\frac{k(r)}{r}
=
\frac{1}{2\pi}
\int_0^{2\pi}
2\operatorname{Re}
\left(
e^{-i\theta}V(re^{i\theta})
\right)\,d\theta.
\]
Therefore, by the monotonicity of $k$ in Lemma~\ref{sp},
\[
m'(r)
\ge
\frac{k(r)}{r}
- {2C_0}{\phi(r)} \ge
\frac{k(r_0)}{r}
- {2C_0}{\phi(r)}.
\]
Integrating from $r_0$ to $r$ yields
\[
m(r)
\ge
m(r_0)
+
k(r_0)\ln\frac{r}{r_0}
-
2C_0\int_{r_0}^r {\phi(s)}{ds}.
\]
Making use of the definition of $\Phi$ and its assumption, 
\[
m(r)\ge -2C_0 \Phi(r)+O(\ln r)\ge -(2C_0+o(1))\Phi(r)
\qquad\text{as }r\to\infty.
\]

On the other hand, the assumed decay of $u$ gives
\[
m(r)\le -C\Phi(r)
\]
for all sufficiently large $r$. Since $C>2C_0$, these two
estimates are incompatible as $r\to\infty$. Hence $ u\equiv0 $  on $
\mathbb C\setminus\overline{D_R}.$

We now assume  $n\ge 2$. Writing $V= \sum_{j=1}^n V_jd\bar z_j$, the equation \eqref{eqn} is equivalent to  
\begin{equation}\label{dp}
    \bar\partial_{z_j} u= V_j u, \qquad  j=1, \ldots, n
\end{equation}
in the sense of distributions on $\mathbb C^n\setminus\overline{B_R}$. We   consider slices of $\mathbb C^n$ parallel to the $z_1$-axis and  $z=(z_1,z'),$ with $ z'\in\mathbb C^{n-1}.$   For almost every fixed $z'\in\mathbb C^{n-1}$   with $|z'|<\frac{R}{2}$, Lemma~\ref{slice}, applied to the first equation of \eqref{dp}, gives $$\tilde u(\zeta):=u(\zeta,z')\in L^1_{ {loc}}(\mathbb C\setminus
\overline{D_{\sqrt{R^2-|z'|^2}}}), \qquad
     \tilde V(\zeta): =V_1(\zeta,z') \in L^\infty(\mathbb C\setminus
\overline{D_{\sqrt{R^2-|z'|^2}}})$$
and 
\[
    \bar\partial_\zeta \tilde u
    =
     \tilde V\tilde u\qquad \text{on}\ \ \mathbb C \setminus
\overline{D_{\sqrt{R^2-|z'|^2}}}
\]
in the sense of distributions.  
   
Let   $\rho=\sqrt{|\zeta|^2+|z'|^2}.$ Since $\rho\ge|\zeta|$  and $\phi$ is nonincreasing, for sufficiently large $|\zeta|$,
\[
| \tilde V(\zeta)|
\le
 {C_0}{\phi(\rho)}
\le
 {C_0}{\phi(|\zeta|)}.
\]
Moreover, since $\Phi$ is increasing,
\[
|\tilde u(\zeta)|
\le
e^{-C\Phi(\rho)}
\le
e^{-C\Phi(|\zeta|)}.
\]
 The one-dimensional result therefore applies and yields
 $\tilde u\equiv0$  
for almost every $z'\in\mathbb C^{n-1}$ with $|z'|<\frac{R}{2}$. Consequently, by Fubini's theorem, $u=0$ almost everywhere on the nonempty open set   $\{|z'|<\frac{R}{2}, \ |z_1|> \sqrt{R^2-|z'|^2}\}$. Hence $u\equiv 0$   in $\mathbb C^n\setminus\overline{B_R}$ by the weak unique continuation property in Theorem~\ref{pz}  part~1.
  \end{proof}

 \begin{remark}
 The condition \eqref{dc} is essential. Indeed, in the borderline case  
\[
\phi(r)=\frac1r,
\]
one has \[
\Phi(r)= \int_{1}^r\phi(s)\,ds
=\ln r 
\]
with $R_0=1$, so that the hypothesis \eqref{dc} fails. Moreover, for any $C>0$, choose an integer
$m\ge C$ and consider the nontrivial holomorphic function
\[
u(z)=z^{-m},\qquad |z|>1.
\]
Then  \[
|u(z)|=|z|^{-m}
\le |z|^{-C}
=e^{-C\Phi(|z|)},\qquad |z|>1.
\]
Thus, when \eqref{dc} is dropped, even an arbitrarily large constant
$C$ in the prescribed decay does not force vanishing. This shows that
the borderline scale $\phi(r)=\frac{1}{r}$ cannot in general be included.

 \end{remark}

\begin{remark}\label{gensharp}
The constant $2C_0$ in Theorem~\ref{gen} is sharp. Indeed, 
let
\[
u(z):=e^{-2C_0\Phi(|z|)},
\qquad |z|>R.
\]
Since $\Phi$ is locally absolutely continuous and
$\Phi'(r)= {\phi(r)}$, for almost every $r>R\ge R_0$, we have
$\bar\partial u=Vu$ 
almost everywhere on $|z|>R$, where
$V(z):
=
- C_0{\phi(|z|)}
\sum_{j=1}^n
\frac{z_j}{|z|}\,d\bar z_j.$ 
Thus $$|V(z)|= C_0{\phi(|z|)}.$$ 
Hence the strict threshold $C>2C_0$ cannot be improved.
\end{remark}

We next prove Theorem \ref{cr} for compactly supported potentials.   The proof follows the approach of \cite{CR}, while extending the result to higher dimensions and relaxing the assumption $V\in L^\infty(\mathbb C)$ imposed there to $V\in L^p(\mathbb C^n)$ for some $p>2$.

 \begin{proof}[Proof of Theorem \ref{cr}: ]We first prove the Theorem when $n=1$. 
 By Lemma \ref{bs} part 1,  $ u\in W_{loc}^{1, p}(\mathbb C)$.  In particular,   $u$ is  continuous on $\mathbb C$. Since $\lim_{z\rightarrow \infty} u =0,$ it follows that $u$ is bounded on $\mathbb C$.

Define
\begin{equation} \label{lbc}
\lambda(z)
=
-\frac{1}{\pi}
\int_{\mathbb C}
\frac{V(\zeta)}{\zeta-z}\,dv_\zeta,
\qquad z\in\mathbb C.
\end{equation}
Since $V\in L^p(\mathbb C)$ for some $p>2$ and has compact support,
we have  $\lambda\in W_{ {loc}}^{1,p}(\mathbb C)$  
and
\[
\bar\partial\lambda=V\qquad \text{in}\ \ \mathbb C
\]
in the sense of distributions. In particular, by the Sobolev
embedding theorem, $\lambda$ is continuous on $\mathbb C$.
Moreover, the compact support of $V$  implies from \eqref{lbc} that
$\lim_{|z|\to\infty}\lambda(z)=0.$ 
Hence $\lambda$ is bounded on $\mathbb C$.

By  Lemma \ref{pc}, \[
\bar\partial(ue^{-\lambda})=0
\qquad\text{in }\mathbb C
\]
in the sense of distributions.
Thus    $ ue^{-\lambda} $ is holomorphic on $\mathbb C$. On the other hand, since   $\lambda$ is bounded and $\lim_{|z|\to\infty}u(z)=0,$ we have  \[\lim_{z\rightarrow \infty}  u(z)e^{-\lambda(z)} =0. \]
Liouville's theorem therefore yields
 $ ue^{-\lambda}\equiv0.$
 Since $e^{-\lambda}$ never vanishes, it follows that
 $u\equiv0$ on $\mathbb C$.

The  case  $n\ge 2$ follows from the one-dimensional result by the same coordinate slicing argument as in the proof  of Theorem \ref{gen}. Write $V=\sum_{j=1}^n V_j\,d\bar z_j$  and decompose $z=(z_1,z')$, where $z'\in\mathbb C^{n-1}$.
For almost every fixed $z'\in\mathbb C^{n-1}$,  Lemma~\ref{slice} gives
\[
\tilde  u(\zeta)=u(\zeta,z')\in L_{ {loc}}^2(\mathbb C),
\qquad
\tilde  V (\zeta)=V_1(\zeta,z')\in L^p(\mathbb C),
\]
and \[
\bar\partial_\zeta\tilde  u
=
\tilde  V\,\tilde  u
\qquad\text{in }\mathbb C
\]
in the sense of distributions. 
  Moreover, since $V$ is compactly supported
in $\mathbb C^n$, the function $\tilde  V $ is compactly supported
in $\mathbb C$ for almost every $z'$. For each fixed $z'$,  as $|\zeta|\to\infty$, $|(\zeta,z')|\rightarrow\infty$, and thus by assumption 
\[
\lim_{\zeta\rightarrow \infty}\tilde  u(\zeta) =0.
\]
The one-dimensional result therefore yields  $\tilde  u\equiv0$  
for almost every $z'\in\mathbb C^{n-1}$. Hence, by Fubini's theorem,
$u\equiv 0$   in $\mathbb C^n$.  
  \end{proof}

\section{Proof of Theorem   \ref{main4n} }
In this section, we   prove Theorem \ref{main4n} with $L^2$ potentials  under $L^2$-flatness decay assumption. We first establish    a one-dimensional unique continuation result at infinity. This is obtained   by inversion from   Theorem \ref{pz} part 2 at a finite point. To pass  to higher dimensions using the coordinate slicing,    we   further show, using Fubini's theorem and a dyadic argument,  that the global $L^2$-flatness of $u$ implies $L^2$-flatness on almost every such slice. The one-dimensional result can then be applied on almost every slice.

\begin{proof}[Proof of Theorem \ref{main4n}: ]
We first prove the case $n=1$.  Let $v(z): = u(\frac{1}{z}) $ and $ W(z): = -\frac{V(\frac{1}{z})}{\bar z^2}$ on $D_\frac{1}{R}\setminus\{0\}$.  Then   by Lemma \ref{rev}, $ v\in W_{loc}^{1,2}(D_\frac{1}{R}\setminus\{0\})   $ and  $$ \bar\partial v     = W  v  \ \ \text{on}\ \  D_\frac{1}{R}\setminus\{0\}  $$
 in the sense of distributions. 
We next verify that $v$ satisfies the hypothesis in Theorem \ref{pz} part 2.

Since $u \in L^2(\mathbb C\setminus \overline{D_R})$,   making use of change of variables, we infer  
$$\int_{D_\frac{1}{R}}|v(z)|^2dv_z = \int_{\mathbb C\setminus \overline{D_R}}\frac{|v(\frac{1}{z})|^2}{|z|^4}dv_z = \int_{\mathbb C\setminus \overline{D_R}}\frac{|u(z)|^2}{|z|^4}dv_z\le \max\{1, R^{-4}\}\int_{\mathbb C\setminus \overline{D_R}}|u(z)|^2dv_z <\infty. $$ 
Similarly, since $V\in L^2(\mathbb C \setminus \overline{D_R})$
and 
$$\int_{D_\frac{1}{R} } |W(z)|^2dv_z = \int_{D_\frac{1}{R} }   \frac{|V(\frac{1}{z})|^2 }{|z|^4}  dv_z = \int_{\mathbb C\setminus \overline{D_R} } |V(z)|^2 dv_z <\infty.$$
In particular, $v \in L^2(D_\frac{1}{R})$ and $Wv\in L^1(D_\frac{1}{R})$. 

We then apply a removable singularity result of Harvey-Polking \cite{HP} (see also \cite[Lemma A.1]{PZ}) to obtain  \begin{equation}\label{r1}
    \bar\partial v =  Wv \ \ \text{on}\ \ D_\frac{1}{R} 
\end{equation}   
in the sense of distributions. Moreover, since 
$$ \int_{D_\frac{1}{R}\ }|\nabla v(z)|^2dv_z = \int_{\mathbb C\setminus \overline{D_R}}\frac{|\nabla v(\frac{1}{z})|^2}{|z|^4}dv_z = \int_{\mathbb C\setminus \overline{D_R}}|\nabla u(z)|^2dv_z<\infty, $$ it follows that 
$$v\in W^{1,2}(D_\frac{1}{R}).$$
On the other hand,  the $L^2$ flatness assumption of $u$ at infinity further leads to  
$$ r^{-m}\int_{|z|<r}|v(z)|^2 dv_z = r^{-m}\int_{|z|>\frac{1}{r}}\frac{|u(z)|^2}{|z|^4}dv_z \le r^{4-m}\int_{|z|>\frac{1}{r}}|u(z)|^2dv_z \rightarrow 0$$
as $r\rightarrow 0$.

Altogether we have $v\in W^{1,2}(D_{\frac{1}{R}})$ satisfying \eqref{r1} for some $W\in L^2(D_{\frac{1}{R}}) $, and $v$ vanishes to infinite order in the $L^2$ sense  at $0$. Making use of Theorem \ref{pz} part 2  we have $v\equiv 0$ on $ D_{\frac{1}{R}}$. Thus $u = 0$ on $ \mathbb C\setminus \overline{D_R}$.

 We next assume $n\ge2$.
Write $ V=\sum_{j=1}^n V_j\,d\bar z_j$
and decompose $z=(z_1,z')$, where $z'\in\mathbb C^{n-1}$.
For almost every fixed $z'$ with $|z'|<R/2$, set
\(\tilde u(\zeta):=u(\zeta,z'),
\ 
\tilde V(\zeta):=V_1(\zeta,z').
\) 
By Lemma \ref{slice},   
\[
\tilde u
\in
W^{1,2}\left(
\mathbb C\setminus
\overline{D_{\sqrt{R^2-|z'|^2}}}
\right), \qquad 
\tilde V
\in
L^2\left(
\mathbb C\setminus
\overline{D_{\sqrt{R^2-|z'|^2}}}
\right)
\]
for almost every such $z'$ and satisfies
\[
\bar\partial_\zeta\tilde u
=
\tilde V\,\tilde u\qquad \text{on}\ \ \mathbb C\setminus
\overline{D_{\sqrt{R^2-|z'|^2}}}
\]
 in the sense of distributions.

We claim that, for almost every such $z'$, $\tilde u$ vanishes
to infinite order at infinity in the $L^2$ sense. Let
\[
F(r):=\int_{|z|>r}|u(z)|^2\,dv_z
\]
and
\[
F_{z'}(r):=
\int_{|\zeta|>r}
|\tilde u(\zeta)|^2\,dv_\zeta.
\]
For every $r>R$,
\[
\int_{|z'|<R/2}F_{z'}(r)\,dv_{z'}
\le F(r).
\]
Fix a positive integer $m$. By \eqref{flatn}, for all sufficiently
large integers $j$,
\[
F(2^j)\le 2^{-j(m+2)}.
\]
Hence
\[
\sum_j
2^{jm}
\int_{|z'|<R/2}F_{z'}(2^j)\,dv_{z'}\le \sum_j 2^{jm} F(2^j)
<\infty.
\]
By Tonelli's theorem, for almost every $z'$ with $|z'|<R/2$,
\[
2^{jm}F_{z'}(2^j)\rightarrow0
\qquad\text{as }j\to\infty.
\]
Since $F_{z'}(r)$ is nonincreasing in $r$, it follows that
\[
r^mF_{z'}(r)\rightarrow0
\qquad\text{as }r\to\infty.
\]
Taking the intersection over $m\in\mathbb N$, we conclude that
$\tilde u$ is $L^2$-flat at infinity for almost every $z'$.

The one-dimensional result therefore gives
$\tilde u\equiv0$ 
for almost every $z'$ with $|z'|<R/2$. By Fubini's theorem, $u$
vanishes almost everywhere on a nonempty open subset of
$\mathbb C^n\setminus\overline{B_R}$. The weak unique continuation
property in Theorem~\ref{pz}  part~1, then yields
 $u\equiv0$ on $\mathbb C^n\setminus\overline{B_R}.$
\end{proof}

The $L^2$-flatness condition in Theorem~\ref{main4n} is satisfied
under a variety of natural pointwise decay assumptions, as illustrated
by the following examples.

\begin{example}
Suppose
$
u=(u_1,\ldots,u_N)^T
\in W^{1,2}(\mathbb C^n\setminus\overline{B_R})
$
satisfies
$
\bar\partial u=Vu$ on $\mathbb C^n\setminus\overline{B_R},
$
where
$
V\in L^2(\mathbb C^n\setminus\overline{B_R})
$
is a matrix-valued $(0,1)$-form. Suppose that $u$ decays faster than every algebraic rate, namely, for every $k>0$ there exists $C_k>0$ such that
\[
|u(z)|\le C_k|z|^{-k},
\qquad |z|\gg1.
\]
Given $m>0$, choose
\[
k>\frac{m+2n}{2}.
\]
Then
\[
r^m\int_{|z|>r}|u(z)|^2\,dv_z
\lesssim
r^m\int_r^\infty s^{2n-1-2k}\,ds
\lesssim
r^{m+2n-2k}
 \rightarrow0.
\]
Thus \eqref{flatn} holds, and Theorem~\ref{main4n} yields $u\equiv0$.

In particular, this applies to solutions satisfying
\[
|u(z)|\le e^{-c|z|^\alpha},
\qquad |z|\gg1,
\]
for some $c,\alpha>0$, as well as
\[
|u(z)|\le e^{-c(\ln|z|)^{1+\beta}},
\qquad |z|\gg1,
\]
for some $c,\beta>0$, since both decay faster than every negative power of $|z|$.
\end{example}

  The following example shows that the conclusion of Theorem~\ref{main4n} may fail for potentials outside $L^2$, even when the potential belongs to $L^p$
 for some $p>2n$.

\begin{example}\label{ex3}
For each $p>2n$, choose
$
\epsilon\in\left(0,\frac{p-2n}{p}\right),
$
so that $(1-\epsilon)p>2n$. Define
\[
u_\epsilon(z):=e^{-|z|^\epsilon},
\qquad z\in\mathbb C^n\setminus\overline{B_1}.
\]
Then
$
u_\epsilon\in
W^{1,2}(\mathbb C^n\setminus\overline{B_1})
$
and vanishes to infinite order at infinity in the $L^2$ sense.
Moreover,
$
\bar\partial u_\epsilon=Vu_\epsilon,
$
where
\[
V=
-\frac{\epsilon}{2}|z|^{\epsilon-2}
\sum_{j=1}^n z_j\,d\bar z_j
\]
satisfies
$
|V(z)|
=
\frac{\epsilon}{2|z|^{1-\epsilon}}
$
and
$
V\in L^p(\mathbb C^n\setminus\overline{B_1}).
$
Thus for every $p>2n$, there exist potentials $V\in L^p(\mathbb C^n\setminus \overline{B_1})\setminus L^2(\mathbb C^n  \setminus\overline{B_1} )$ for which the conclusion of Theorem~\ref{main4n} fails.
\end{example}

The following example shows that unique continuation may fail for solutions with decay of finite algebraic order, and hence illustrates the essential role of the $L^2$-flatness assumption in Theorem~\ref{main4n}.

\begin{example}
For each $\alpha \in (0,\frac{1}{2})$,  define
$$u =\frac{1}{z e^{(\ln|z|^2)^\alpha}}.$$
Then $u\in W^{1,2}(\mathbb C\setminus \overline{D_2})$  and satisfies $\bar\partial u = V u$ on $\mathbb C\setminus \overline{D_2}$, with 
$$ V= \frac{-\alpha}{\bar z(\ln|z|^2)^{1-\alpha}}\in L^2(\mathbb C\setminus \overline{ D_2}).$$
Moreover,   
$u$ does not vanish to infinite order at infinity in the $L^2$ sense. 
\end{example}

\begin{proof}
The function $u$ is smooth on $\mathbb C\setminus \overline{D_2}  $.  A direct computation gives  \begin{equation*}
    \begin{split}
        &\bar\partial u =\frac{-\alpha}{ |z|^2(\ln|z|^2)^{1-\alpha}e^{(\ln|z|^2)^\alpha}};\\
       & \partial u =  -\frac{1}{z^2e^{(\ln|z|^2)^\alpha}}- \frac{\alpha}{ z^2(\ln|z|^2)^{1-\alpha}e^{(\ln|z|^2)^\alpha}}.
    \end{split}
\end{equation*} Hence  $u$ satisfies $ \bar\partial u = V u $ on $\mathbb C\setminus \overline{D_2}$. 

We next verify the required integrability.  
Since for every $ k\in \mathbb Z^+$, $e^{ 2(2\ln s)^\alpha}> (\ln s)^{k\alpha}$   for all sufficiently large \(s\), we may choose  $k>\frac{1}{\alpha}$ and  obtain
\begin{equation*}
    \int_{|z|>2}|u|^2dv_z \lesssim \int_{2}^\infty  \frac{ds}{ se^{ 2(2\ln s)^\alpha}}\lesssim \int_{2}^\infty  \frac{ds}{ s(\ln s)^{k\alpha}}  \lesssim   \int_{\ln 2}^\infty s^{-k\alpha}  ds \lesssim (\ln 2)^{1- k\alpha }<\infty.
\end{equation*}
On the other hand, using the facts that $e^{ 2(2\ln s)^\alpha}>1$, $\ln s>  1$ for all sufficiently large \(s\) and $0<\alpha<\frac{1}{2}$, 
$$  \int_{|z|>2}|\nabla u|^2dv_z \lesssim \int_{|z|>2} \frac{1}{|z|^{4} } dv_z\lesssim    \int_{2}^\infty s^{-3}  ds<\infty. $$
Similarly, 
\begin{equation*}
    \begin{split}
        \int_{\mathbb C\setminus \overline{D_2}} |V|^2dv_z \lesssim\int_2^\infty \frac{1}{s (\ln s)^{2-2\alpha}}ds = \int_{\ln 2}^\infty  s^{2\alpha-2}ds \lesssim  (\ln 2)^{2\alpha-1}<\infty.
    \end{split}
\end{equation*}
Together, we have $u\in W^{1,2}(\mathbb C\setminus \overline{D_2})$ and $V\in L^2(\mathbb C\setminus \overline{D_2}).$

It remains to show that \(u\) does not vanish to infinite order at infinity in the $L^2$ sense. When $r $ is sufficiently small, we have
$$  \int_{|z|>\frac{1}{r}}|u|^2 dv_z \gtrsim \int_{\frac{1}{r}}^\infty  \frac{1}{ se^{ 2(2\ln s)^\alpha}}ds.$$
Since \(0<\alpha<\frac{1}{2}\), for all sufficiently large \(s\),   $
e^{ 2(2\ln s)^\alpha}< s $. Therefore, for \(r>0\) sufficiently small,
$$  
\int_{|z|>\frac{1}{r}}|u|^2 dv_z \gtrsim \int_{\frac{1}{r}}^\infty  \frac{1}{ s^{2}}ds\gtrsim  r .$$
In particular,  taking any \(m>1\), we get
\[
r^{-m}\int_{|z|>\frac{1}{r}}|u|^2\,dv_z
\gtrsim r^{1-m}\to\infty
\quad \text{as } r\to 0.
\]
The proof is complete. 
\end{proof}

Despite the   failure of the unique continuation property in Theorem \ref{main4n}   when $V\notin L^2$ near infinity as demonstrated above, unique
continuation may still hold under additional structural assumption on the potential, for instance  when  the potential $V=O(\frac{1}{|z|})$ at infinity. Such condition allows potentials outside $L^2 $; indeed, the model case $\frac{1}{|z|}$  belong to   $L^p  \setminus L^2 $   for all $2n<p<\infty$ on    $  \mathbb C^n\setminus \overline{B_R}$.

\begin{theorem}\label{main3}
  Suppose $u=(u_1, \ldots, u_N)^T \in W^{1,2}(\mathbb C^n\setminus \overline{B_R})$   satisfies \begin{equation*} 
        \bar\partial u = Vu \ \ \text{on}\ \  \mathbb C^n \setminus \overline{B_R}
    \end{equation*}  for some  measurable matrix-valued  $(0,1)$-form $V$ on $\mathbb C^n\setminus \overline{B_R}$   with $|V| \le  \frac{C}{|z|}$   for some $C>0$. Assume that  $u$ vanishes to infinite order at infinity in the $L^2$ sense. Then  $u$ vanishes identically if either $N=1$, or if  $N\ge 2$ and   $C<\frac{1}{4}$. 
\end{theorem}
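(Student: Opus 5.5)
The plan follows the proof of Theorem \ref{main4n} verbatim: first treat $n=1$ by inversion, then pass to higher dimensions by coordinate slicing. The only change is that part 2 of Theorem \ref{pz} is replaced by part 3.

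\emph{The case $n=1$.} Set $v(w)=u(1/w)$ and $W(w)=-V(1/w)/\bar w^2$ on $D_{1/R}\setminus\{0\}$. Lemma \ref{rev} gives $\bar\partial v=Wv$ there. The key observation is that the bound $|V(z)|\le C/|z|$ is preserved exactly under inversion:
\[
|W(w)|=\frac{|V(1/w)|}{|w|^2}\le \frac{C|w|}{|w|^2}=\frac{C}{|w|}.
\]
As in the proof of Theorem \ref{main4n}, the change of variables gives $v\in W^{1,2}(D_{1/R})$ from $u\in W^{1,2}$, and the $L^2$-flatness of $u$ at infinity gives $L^2$-flatness of $v$ at $0$. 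It remains to extend the equation across the origin. Here $W\notin L^2$ near $0$, but
\[
|Wv|\le \frac{C|v|}{|w|}\in L^1(D_{1/R}).
\]
To see this, use Cauchy--Schwarz on dyadic annuli together with the flatness of $v$; alternatively, $|w|^{-1}\in L^{q}$ for $q<2$ and $v\in L^{q'}$ by Sobolev embedding of $W^{1,2}$. The Harvey--Polking removable singularity result then yields $\bar\partial v=Wv$ on all of $D_{1/R}$. Since $v\in W^{1,2}$, the reduction described after Theorem \ref{pz} shows $|\bar\partial v|\le \frac{C}{|w|}|v|$ a.e. Theorem \ref{pz} part 3 applies when $N=1$, or when $N\ge2$ and $C<\frac14$, so $v\equiv0$ and hence $u\equiv0$.

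\emph{The case $n\ge2$.} Slice with $z=(z_1,z')$ and $|z'|<R/2$. Lemma \ref{slice} shows that for a.e.\ such $z'$ we have $\tilde u=u(\cdot,z')\in W^{1,2}$ outside a disc, with $\bar\partial_\zeta\tilde u=\tilde V\tilde u$ and $\tilde V=V_1(\cdot,z')$. The sliced potential still satisfies the required bound:
\[
|\tilde V(\zeta)|\le |V(\zeta,z')|\le \frac{C}{|(\zeta,z')|}\le \frac{C}{|\zeta|},
\]
so the hypothesis holds with the same constant $C$. This is what lets the threshold $\frac14$ survive in higher dimensions. The dyadic Tonelli argument from the proof of Theorem \ref{main4n} shows that $\tilde u$ is $L^2$-flat at infinity for a.e.\ $z'$. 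The one-dimensional case then gives $\tilde u\equiv0$ on a.e.\ slice, so $u$ vanishes on an open set. We conclude with weak unique continuation, Theorem \ref{pz} part 1, using $V\in L^\infty_{loc}\subset L^2_{loc}$ away from the origin.

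\emph{Main obstacle.} The delicate step is the removability at $0$ in the one-dimensional case, since $W$ is no longer in $L^2$. This requires verifying $Wv\in L^1_{loc}$ and $v\in L^2$ near $0$ so that Harvey--Polking applies. I would handle it via $v\in W^{1,2}(D_{1/R})\subset L^{q'}$ for every $q'<\infty$ together with $|w|^{-1}\in L^{q}$ for $q<2$. A second point to confirm is that part 3 of Theorem \ref{pz} accepts $W^{1,2}_{loc}$ solutions on a disc centred at $0$ after this extension.
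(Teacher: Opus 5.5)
Your proposal is correct and follows the paper's proof essentially step for step. For $n=1$ you invert, observe that the bound $|W(w)|\le C/|w|$ is preserved exactly, and apply Theorem~\ref{pz} part~3; for $n\ge 2$ you slice in coordinates with $|\tilde V(\zeta)|\le C/|\zeta|$, use the dyadic flatness argument, and finish with weak unique continuation. Your added check that $Wv\in L^1$ near $0$ is valid; it is needed because $W\notin L^2$ here, a point the paper passes over by referring back to Theorem~\ref{main4n}. In fact $Wv=\bar\partial v\in L^2(D_{1/R})$ holds automatically, by the conformal invariance of the Dirichlet integral under $w\mapsto 1/w$.
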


\begin{proof}
When  $n=1$,  define $w(z): = u(\frac{1}{z}) $ and $ W(z): = -\frac{V(\frac{1}{z})}{\bar z^2}   $ on $D_\frac{1}{R}\setminus\{0\}$.  As in the proof of Theorem~\ref{main4n},  $w\in W^{1,2}(D_{\frac{1}{R}})$ and   vanishes to infinite order at $0$. Moreover,  
$\bar\partial w=Ww$ with 
$|W(z)| = \frac{|V(\frac{1}{z})|}{|z|^2}\le \frac{C}{|z|}$ on $D_{\frac{1}{R}} $. Making use of Theorem \ref{pz} part 3, we have $w\equiv 0$ on $ D_{\frac{1}{R}}$. Thus $u = 0$ on $ \mathbb C\setminus \overline{D_R}$.   

The case $n\ge2$ is reduced to the one-dimensional result by the same slicing argument as in the proof of Theorem~\ref{main4n}. Write  $V=\sum_{j=1}^n V_j\,d\bar z_j$ and decompose $z=(z_1,z')$, where $z'\in\mathbb C^{n-1}$. For almost every fixed $z'$ with $|z'|<R/2$, the slice \(  \tilde u(\zeta):=u(\zeta,z')
\)
belongs to
 $W^{1,2}\left(
\mathbb C\setminus
\overline{D_{\sqrt{R^2-|z'|^2}}}
\right)$ 
and satisfies
\[
\bar\partial_\zeta   \tilde u
=
  \tilde V\,  \tilde u,
\qquad
  \tilde V(\zeta):=V_1(\zeta,z'),
\]
with
\[
|  \tilde V(\zeta)|
\le
\frac{C}{\sqrt{|\zeta|^2+|z'|^2}}
\le
\frac{C}{|\zeta|}.
\]
As in the proof of Theorem~\ref{main4n}, $  \tilde u$ is $L^2$-flat at infinity for almost every such $z'$. The one-dimensional result therefore gives $  \tilde u\equiv0$ for almost every $z'$ with $|z'|<R/2$. By Fubini's theorem, $u$ vanishes on a nonempty open subset of $\mathbb C^n\setminus\overline{B_R}$, and Theorem~\ref{pz}, part~1, then yields \(u\equiv0\)  on $\mathbb C^n\setminus\overline{B_R}.$
\end{proof}

As an immediate consequence of Theorem~\ref{main4n}, we obtain the following unique continuation result at infinity for the full gradient operator.

\begin{cor}
Suppose $u=(u_1, \ldots, u_N)^T \in W^{1,2}(\mathbb R^{2n}\setminus \overline{B_R} )$   satisfies $ |\nabla u|\le V|u|$
 for some nonnegative $V\in L^2(\mathbb R^{2n}\setminus \overline{B_R})$.  If $u$ vanishes to infinite order at infinity in the $L^2$ sense, then $u$ vanishes identically.
\end{cor}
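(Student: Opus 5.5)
The plan is to realize the full-gradient inequality as a $\bar\partial$-system on $\mathbb C^n\cong\mathbb R^{2n}$ and then invoke Theorem~\ref{main4n}.

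First identify $\mathbb R^{2n}$ with $\mathbb C^n$ through $z_j=x_j+iy_j$. Since
\[
\partial_{\bar z_j}=\tfrac12(\partial_{x_j}+i\partial_{y_j}),
\]
we have $|\bar\partial u|\le |\nabla u|$ pointwise almost everywhere; at worst a harmless dimensional constant appears, depending on the normalization of $|\cdot|$ for $(0,1)$-forms. Hence
\[
|\bar\partial u|\le V|u| \quad\text{a.e. on }\ \mathbb C^n\setminus\overline{B_R}.
\]

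Next, as in the discussion following Theorem~\ref{pz}, convert this inequality into an equation. Set $Z=\{u=0\}$. By Stampacchia's property, $\bar\partial u=0$ a.e.\ on $Z$. Define the matrix-valued $(0,1)$-form $\mathcal V=(\mathcal V_{kj})$ by $\mathcal V_{kj}=0$ on $Z$ and
\[
\mathcal V_{kj}=\frac{(\bar\partial u_k)\,\overline{u_j}}{|u|^2}\quad\text{off } Z.
\]
Then $\mathcal V u=\bar\partial u$ a.e., and $|\mathcal V|\lesssim |\bar\partial u|/|u|\le V$. Consequently $\mathcal V\in L^2(\mathbb C^n\setminus\overline{B_R})$ globally, not merely locally, because $V$ is globally $L^2$.

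Since $u\in W^{1,2}$, the a.e.\ identity $\bar\partial u=\mathcal V u$ coincides with the distributional one; here $\mathcal V u\in L^1_{loc}$ because it equals $\bar\partial u\in L^2$. The hypotheses $u\in W^{1,2}(\mathbb C^n\setminus\overline{B_R})$ and \eqref{flatn} carry over verbatim. Theorem~\ref{main4n} therefore gives $u\equiv 0$.

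There is no genuine obstacle. The only point needing care is that the potential constructed from the inequality inherits global $L^2$ integrability on the exterior domain, and the pointwise bound $|\mathcal V|\lesssim V$ secures exactly this.
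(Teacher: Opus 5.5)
Your proposal is correct and follows the paper's proof: identify $\mathbb R^{2n}$ with $\mathbb C^n$, note $|\bar\partial u|\le|\nabla u|\le V|u|$, and apply Theorem~\ref{main4n}. The paper leaves implicit the passage from the inequality to a system $\bar\partial u=\mathcal V u$, relying on the reduction described after Theorem~\ref{pz}. You spell that passage out and check that $|\mathcal V|=|\bar\partial u|/|u|\le V$, which gives $\mathcal V\in L^2$ on the whole exterior domain, as Theorem~\ref{main4n} requires.
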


\begin{proof}
Identify $\mathbb R^{2n}$ with $\mathbb C^n$. Then 
$$  |\bar\partial u| \le |\nabla u|\le V|u|.$$
The conclusion now follows from
Theorem~\ref{main4n}.
\end{proof}

Theorem \ref{main4n}  can also   be  applied to study the uniqueness of    certain nonlinear PDEs concerning Laplacian  under  sufficiently rapid decay at infinity. 

\begin{cor}\label{non}
Let $\psi =(\psi_1, \ldots, \psi_N)^T\in L_{loc}^1((\mathbb R^{2}\setminus \overline{D_R})\times \mathbb R^{2N})$ satisfy       \begin{equation*} 
    |\psi(x, y_1)-\psi(x, y_2)|\le V(x)|y_1-y_2|, \ \ \text{for a.e.}\ \ x\in  \mathbb R^{2}\setminus \overline{D_R},\  y_1, y_2 \in \mathbb R^{2N}
    \end{equation*} for some nonnegative  $V\in L^2(\mathbb R^{2}\setminus \overline{D_R})$. Then there exists at most one $  W^{2,2}(\mathbb R^2\setminus \overline{D_R} )$ real-valued vector solution $u=(u_1, \ldots, u_N)^T$    to  \begin{equation*}
     \Delta  u = \psi(\cdot, \nabla u)\ \ \text{on}\ \ \mathbb R^2\setminus \overline{D_R} 
 \end{equation*}  
such that $\nabla u$ vanishes to infinite order at infinity in the $L^2$ sense.
\end{cor}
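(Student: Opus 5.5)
Suppose $u^{(1)}$ and $u^{(2)}$ are two $W^{2,2}(\mathbb R^2\setminus \overline{D_R})$ real-valued vector solutions with $\nabla u^{(1)}$ and $\nabla u^{(2)}$ both $L^2$-flat at infinity. The plan is to show that $w:=u^{(1)}-u^{(2)}$ is constant, and then conclude equality. For the conclusion to read ``at most one solution'' we need $w\equiv 0$. This follows once $\nabla w\equiv 0$, provided we normalize: a constant in $W^{2,2}$ of the unbounded exterior domain lies in $L^2$ only if it is zero. So the real task is to prove $\nabla w\equiv0$.

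To pass to a first-order system, identify $\mathbb R^2$ with $\mathbb C$ and for each component set $f_k:=\partial_{x_1} w_k - i\,\partial_{x_2} w_k = 2\partial_z w_k$, and let $f=(f_1,\ldots,f_N)^T$. Since $w\in W^{2,2}$, we have $f\in W^{1,2}(\mathbb C\setminus\overline{D_R})$. Moreover
\[
\bar\partial f_k=\tfrac12\cdot 2\,\partial_{\bar z}\partial_z w_k\cdot 2/2=\tfrac12\Delta w_k
\]
(using $4\partial_z\partial_{\bar z}=\Delta$), so $|\bar\partial f_k|=\tfrac12|\Delta w_k|$ almost everywhere. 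Subtracting the two equations and using the Lipschitz hypothesis on $\psi$ gives
\[
|\Delta w|=|\psi(\cdot,\nabla u^{(1)})-\psi(\cdot,\nabla u^{(2)})|\le V|\nabla w|=V|f|
\]
almost everywhere, since $|f_k|=|\nabla w_k|$. Hence $|\bar\partial f|\le \tfrac12 V|f|$ almost everywhere with $\tfrac12V\in L^2$. By the reduction following Theorem~\ref{pz}, there is a matrix-valued $(0,1)$-form $\mathcal V$ with $|\mathcal V|\lesssim V$, so $\mathcal V\in L^2(\mathbb C\setminus\overline{D_R})$, and $\bar\partial f=\mathcal V f$ holds almost everywhere, hence in the sense of distributions since $f\in W^{1,1}_{loc}$.

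The $L^2$-flatness of $f$ at infinity follows from $|f|=|\nabla w|\le|\nabla u^{(1)}|+|\nabla u^{(2)}|$, so that $\int_{|z|>r}|f|^2\le 2\int_{|z|>r}|\nabla u^{(1)}|^2+2\int_{|z|>r}|\nabla u^{(2)}|^2$, and both terms are flat. Theorem~\ref{main4n} with $n=1$ then gives $f\equiv0$, so $\nabla w\equiv0$ on the connected set $\mathbb C\setminus\overline{D_R}$. Thus $w$ is constant, and since $w\in L^2$ of an infinite-measure domain, $w\equiv0$.

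The main obstacle is bookkeeping rather than analysis. One point is verifying the correct constant relating $\bar\partial(\partial_z w)$ to $\Delta w$, which only affects the harmless factor in front of $V$. The other is checking that the measurable matrix $\mathcal V$ built from the inequality is genuinely in $L^2$ and that $f\in W^{1,2}$, so that Theorem~\ref{main4n} applies verbatim.
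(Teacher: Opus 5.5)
Your proposal is correct and follows essentially the same route as the paper: you pass to $f=2\partial w$ (the paper uses $\partial v$), derive $|\bar\partial f|\le\frac12 V|f|$ from the Lipschitz bound on $\psi$, transfer $L^2$-flatness from $\nabla w$ to $f$, apply Theorem~\ref{main4n} with $n=1$, and then conclude $w\equiv0$ because $w\in L^2$ of an infinite-measure domain. The only blemish is the intermediate expression in your display, which actually evaluates to $\partial_{\bar z}\partial_z w_k=\frac14\Delta w_k$; the correct computation is $\bar\partial f_k=2\partial_{\bar z}\partial_z w_k=\frac12\Delta w_k$, so your final identity and constant are still right.
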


\begin{proof}  Suppose  that   $u_1$ and $u_2$ both satisfy $ \Delta  u = \psi(x, \nabla u)$ on $\mathbb R^2  \setminus \overline{D_R}$, and $   \nabla u_j(x) $ vanishes to infinite order at infinity in the $L^2$ sense, $j=1, 2$. Let  $v: = u_1-u_2$. Then   
$$|\Delta v(x)| = | \psi(x, \nabla u_1(x)) -  \psi(x, \nabla u_2(x))|\le V(x)|\nabla v(x)|\ \ \text{on}\ \ \mathbb R^2\setminus \overline{D_R}, $$
 and $\nabla v$  is
$L^2$-flat at infinity. 

    With the convention
 $\bar\partial=\frac12(\partial_{x_1}+i\partial_{x_2}),
$  we have  $\Delta v(x) = 4\bar\partial \partial v(z) $. Since $v$ is real-valued componentwise,  $|\nabla v(x)| = 2|\partial v(z)|. $ Let  $$ \tilde v: = \partial v.$$ 
Then  $\tilde v\in W^{1, 2}(\mathbb R^2\setminus \overline{D_R})$, and    the preceding inequality   implies 
    $$ |\bar\partial \tilde v|\le \frac{1}{2}V |\tilde v|\ \ \text{on}\ \ \mathbb R^2\setminus \overline{D_R}  $$
    Moreover, since $|\tilde v|=\frac12|\nabla v|$,  $\tilde v$  is $L^2$-flat at infinity. 
    Applying Theorem \ref{main4n}, we obtain $\tilde v \equiv 0$. Since \(v\) is real-valued and $v\in L^2(\mathbb R^2\setminus \overline{D_R})$, this implies that \(v\equiv 0\).  
\end{proof}

\bibliographystyle{alphaspecial}

\fontsize{11}{11}\selectfont

\vspace{0.7cm}
\noindent pan1@pfw.edu,

\vspace{0.2 cm}

\noindent Department of Mathematical Sciences, Purdue University Fort Wayne, Fort Wayne, IN 46805-1499, USA.\\

\noindent zhan1313@pfw.edu,

\vspace{0.2 cm}

\noindent Department of Mathematical Sciences, Purdue University Fort Wayne, Fort Wayne, IN 46805-1499, USA.\\

\end{document}